\newtheorem{theorem}{Theorem}
\newtheorem{prop}[theorem]{Proposition}
\newtheorem{lemma}[theorem]{Lemma}
\newtheorem{corollary}[theorem]{Corollary}
\theoremstyle{definition}
\newtheorem{definition}[theorem]{Definition}
\newtheorem{remark}[theorem]{Remark}
\newcommand{\Pac}{\mathcal{P}_{\text{ac}}(U)}
\newcommand{\txi}{\tilde{\xi}}
\newcommand{\tmu}{\tilde{\mu}}
\renewcommand{\SS}{\mathbb{S}}
\newcommand{\id}{\operatorname{id}}
\newcommand{\B}{\mathcal{B}(\mathbb{R})}
\newcommand{\N}{\mathbb{N}}
\newcommand{\R}{\mathbb{R}}
\newcommand{\txtd}{\textnormal{d}}
\newcommand{\txte}{\textnormal{e}}
\newcommand\numberthis{\addtocounter{equation}{1}\tag{\theequation}} 
\renewcommand{\epsilon}{\varepsilon}
\newcommand{\eps}{\epsilon}
\newcommand{\new}[1]{\textcolor{black}{#1}}
\let\oldmarginpar\marginpar
\renewcommand\marginpar[1]{\-\oldmarginpar[\scriptsize\raggedright\tiny\textcolor{black}{#1}]%
	{\raggedright\tiny\textcolor{black}{#1}}}
\DeclareMathOperator{\supp}{supp}
\DeclareMathOperator{\dive}{{\mathrm{div}}}
\begin{document}
\title{Global stability for McKean--Vlasov equations on large networks}
\author{Christian Kuehn\thanks{Technical University of Munich, Faculty of Mathematics, Boltzmannstra\ss e 3, 85748 Garching bei M\"unchen. E-mails: \texttt{\{christian.kuehn, tobias.woehrer\}@tum.de} \newline CK is supported by a Lichtenberg Professorship. TW is supported by the FWF under grant no. J 4681-N.} ~ and Tobias W\"ohrer$^*$}
\date{}
\maketitle
\abstract{
We investigate the mean-field dynamics of stochastic McKean differential equations with heterogeneous particle interactions described by large network structures. To express a wide range of graphs, from dense to sparse structures, we incorporate the recently developed graph limit theory of graphops into the limiting McKean--Vlasov equations. Global stability of the splay steady state is proven via a generalized entropy method, leading to explicit graph-structure dependent decay rates. We highlight the robustness of the entropy approach by extending the results to the closely related Sakaguchi-Kuramoto model with intrinsic frequency distribution. We also present central examples of random graphs, such as power law graphs and the spherical graphop, and analyze the limitations of the applied methodology.}

\section{Introduction}

Our analysis is motivated by \emph{stochastic McKean (or Kuramoto-type) differential equations} \cite{McKean1, McKean2}, which describe the behavior of $N$ particles which diffuse and interact with each other. We are specifically interested in heterogeneous interaction patterns that go beyond the case that all particles interact with all others. General interactions are then described by a graph/network structure with the corresponding equation system given as
\begin{equation}\label{eq:mckean} 
	\txtd X_t^i = -\frac{\kappa}{N r_N} \sum_{j\neq i}^N A^{ij} \nabla D(X_t^i - X_t^j)~ \txtd t + \sqrt{2}\, \txtd B_t^i, \quad i=1,\ldots, N,
\end{equation}
where $X_t^i$ is the $i$-th graph node with values on the $d$-dimensional flat torus of length $L>0$, denoted as $U:=[-\frac{L}{2}, \frac{L}{2}]^d$. The entries $A^{ij}\in\{0,1\}$ of the adjacency matrix $A^{(N)}:=(A^{ij})_{i,j=1,\ldots,N}$ represent the (undirected) graph edges and $1\geq r_N>0$ is a rescaling factor relevant for sparse graphs. The nodes are coupled along edges according to a (periodic) interaction potential $D:U\to \R$ with relative coupling strength $\kappa>0$. The stochasticity in the system is modeled by independent Brownian motions $B_t^i$ on $U$. Here we describe the dynamics of \eqref{eq:mckean} in the mean-field limit when the number of graph nodes tends towards infinity.

In the classical case of homogeneous interaction patterns, i.e.\ $A^{ij}= 1$ for all $i,j=1,\ldots N$ in \eqref{eq:mckean}, the mean-field limit (see \cite{oelschlager, Saka}) is given as the (homogeneous) \emph{McKean--Vlasov equation}
	\begin{align}\label{eq:mckeanPDE}
		\begin{aligned}
			\partial_t \rho &= \kappa \dive_x[\rho (\nabla_x D \star \rho)]+ \Delta_x \rho ,\quad t>0,\\
		\rho(0) &= \rho_0,
		\end{aligned}
	\end{align}
	where the solution $\rho(t,x)$ describes the average particle density at time $t\geq 0$ and position $x\in U$. Such nonlinear and nonlocal Fokker--Planck equations \cite{frank2005} have been analyzed from a wide variety of perspectives. Most prominently, synchronization phenomena for the Kuramoto model \cite{kuramoto75, kuramoto84, chiba2015} and its (noisy) mean-field formulations \cite{SSY88,strogatz91, crawford1994, strogatz2000, dietert2018} correspond to the potential $D(x) = -\cos(\frac{2\pi x}{L})$ in equation~\eqref{eq:mckeanPDE}. Other important examples are the Hegselmann-Krause model \cite{hegselmann02, BQQLC} in opinion dynamics and the  Keller-Segel model for bacterial chemotaxis \cite{keller1970}.	We refer to \cite{BQQLC, CaGyPaSc} for a recent treatment of a large range of interaction potentials (for homogeneous interactions), proving global stability via entropy methods and providing a bifurcation analysis.
 
	
	To incorporate heterogeneous interactions in the mean-field limit of \eqref{eq:mckean}, we require a theory of graph limits, which has made significant advances in recent years. For dense graphs the infinite node limits can be expressed as graph functions, called \emph{graph\-ons} \cite{LS06, borgs17}. A major shortcoming of this theory is that graphs with a subquadratic number of edge connections cannot be handled or the nodes are forced to have unbounded degree in the limit. Such graph cases require a refined treatment which has been developed for different subcases in a fragmented fashion. For sparse graphs of bounded degree there is the local convergence theory of Benjamini and Schramm \cite{BS01}, as well as the stronger local-global convergence theory for measure-based \emph{graphings} as limiting objects \cite{HLS2014, bollobas2011}. Sparse graphs of unbounded degree, such as power law graphs, which are most challenging but crucial for applications, have been treated via rescaled graphon convergence theory for $L^p$-graphons \cite{borgs2019}.
	
	These different graph limit frameworks have slowly started to be incorporated into dynamical mean-field settings. For various limiting equations solution theory and finite-graph approximation has been rigorously introduced and first graph limit dependent stability results have been shown \cite{ChMe1, ChMe2, KX2021, Med2014, med2017, lacker2023, lacker2019}. For non-dense graphs, there exists an appropriate choice of the rescaling factor $r_N$ in \eqref{eq:mckean} that results expressive limit. We refer to \cite{KX2021, med2019, borgs2019} for the detailed analysis.
	
	In this work we provide first entropy based global stability results for solutions to the mean-field limit of \eqref{eq:mckean}. To include a wide range of interaction structures --- from dense, to bounded and unbounded sparse graphs --- we formulate the mean-field problem in the very general graph limit theory of \emph{graphops} (or graph operators). Backhausz and Szegedy introduced this theory in \cite{BaSz20}, which unifies the above mentioned approaches by lifting them to the more abstract level of probability space based action convergence and operator theory. As we will see, this approach also has the advantage that it integrates rather naturally with the tools used in a PDE context. The mean-field formulation formally leads to the nonlinear \emph{graphop McKean--Vlasov equation} (see \cite{GJKM} for a discussion on the derivation)
	\begin{align}\label{eq:graphopmckean}
		\begin{aligned}
		\partial_t \rho &= \kappa \dive_x\Big(\rho V[A](\rho)\Big)+ \Delta_x \rho ,\quad t\geq 0,\\
		\rho(0) &= \rho_0,
	\end{aligned}
	\end{align}
where the solution $\rho(t,x,\xi)$ additionally depends on a graph-limit variable $\xi\in \Omega$ (where $(\Omega, \mathcal{A},\mu)$ is a Borel probability space specified below). In this abstract formulation, the heterogeneous interactions are expressed by a bounded linear operator $A$ (with the precise properties stated in Section~\ref{sec:het}), resulting in
	\begin{equation}\label{eq:vlasovgraphop}
	V[A](\rho)(t,x,\xi):= \int_{U} \nabla_x D(x-\tilde{x})(A\rho)(t, \tilde{x},\xi) ~\txtd \tilde{x},
	\end{equation}
 where we used the shorthand notation $(A\rho)(t, \tilde{x},\xi):=[A\rho](t, \tilde{x}, \cdot)(\xi)$, i.e., $A$ is an operator acting on the graph variable function space. Existence, and uniqueness have been established in the recent years for prototypical Vlasov-type equations and their approximations of dynamics on finite networks \cite{Ku2020, KX2021, GkKu, GJKM, med2019}. Yet, as far as the authors are aware, entropy based approaches have not yet been applied for any graphop PDE models. This work aims to provide a first application of the entropy method in this unifying graph operator setting. We emphasize the robustness of our method by also obtaining explicit global stability results for the closely related variant of \eqref{eq:graphopmckean} in 1D: the mean-field Sakaguchi-Kuramoto models with intrinsic frequency distributions combined with heterogeneous interactions. To our best knowledge, this is also the first entropy approach for mean-field Kuramoto models with heterogeneous interactions, including those involving graphons.\\

\noindent\textbf{General assumption:}
Throughout the paper, we always assume the interaction potential satisfies $D\in \mathcal{W}^{2,\infty}(U)$.

\subsection{Main results of the paper} Our main result is formulated in Theorem~\ref{thm:heterodecaygraphop}. There we assume reasonable regularity on a normalized initial datum and a square-integrable graphop $A$. Then, it states that classical solutions of the graphop McKean--Vlasov equation \eqref{eq:graphopmckean} converge exponentially fast to the splay (or `incoherent') steady state $\rho_\infty: = \frac{1}{L^d}$ in entropy, provided the interaction strength $\kappa$ is small enough. The bound on $\kappa$ as well as the exponential decay rate are explicit and they depend on the operator norm of the graphop.

	\subsection{Structure of the paper}
In Section~\ref{sec:hom} we review the case of global stability for homogeneous interactions and introduce the entropy method and necessary functional inequalities. In Section~\ref{sec:het} presents our main result of global stability for heterogeneous interaction patterns described by graphops. We further discuss the approaches limitations and the main theorem's formulation for graphons. 
In Section~\ref{sec:saka} we apply our method to the Sakaguchi-Kuramoto model with added frequency distribution.
Section~\ref{sec:examples} investigates explicit graphop examples such as spherical graphops and power law graphons.
	\section{Homogeneous interaction patterns}\label{sec:hom}
	We first recall that for all-to-all coupled interactions --- assuming $\kappa>0$ small enough --- the splay steady state $\rho_\infty:= \tfrac{1}{L^d}$ is a global steady state to which every initial configuration converges exponentially fast. This section we review the results developed in \cite{CaGyPaSc,BQQLC} on which the following sections are built.
	
	\subsection{Global stability via entropy methods}
	We start by revisiting the homogeneous interaction case by assuming $A\rho = \rho$ in \eqref{eq:graphopmckean}. This \new{is} the foundation for our strategy of proving decay estimates for more complex heterogeneous interactions. In the homogeneous case the network variable $\xi$ can be omitted, $\rho(t,x,\xi)\equiv  \rho(t,x)$ and $V[A](\rho) = \nabla_x D \star \rho$, i.e.\ we recover equation \eqref{eq:mckeanPDE}. We will introduce the entropy method and related functional inequalities that we will later extend and use for the heterogeneous interaction case.
	 \begin{definition}
	 	Let the space of absolutely continuous Borel probability measures on $U$ be denoted as $\mathcal{P}_{\text{ac}}(U)$. Then, for each $\rho\in\mathcal{P}_{\text{ac}}(U)$ we define the \emph{relative entropy functional}
	 \begin{equation}\label{eq:entropy}
	 	H(\rho|\rho_\infty):= \int_{U} \rho \log(\frac{\rho}{\rho_\infty}) ~\txtd x
	 \end{equation}
	 with $\rho_\infty(x) := \frac{1}{L^d}$.
	 	 \end{definition}
	 We use this functional to show exponential decay of solutions. The relative entropy is an adequate measure of ``distance'' between a function $\rho\in \mathcal{P}_{\text{ac}}(U)$ and the steady state $\rho_\infty$. Indeed, $H(\rho|\rho_\infty)\geq 0$ for all $\rho\in\Pac$, which can be seen by applying Jensen's inequality with
	  the convex function $x\log x$. It further holds that $H(\rho_\infty|\rho_\infty) = 0$ and the functional is linked to the $L^1(U)$ distance via
	 the \emph{Csisz\'ar-Kullback-Pinsker} (CKP) inequality (see \cite{BV05}) 
	 \begin{equation}\label{eq:CKP}
	 	\|\rho - \rho_\infty \|_{L^1(U)} \leq \sqrt{2 H(\rho|\rho_\infty)}.
	 \end{equation}
	  For functions $\rho\in\Pac$ with $\sqrt{\rho} \in H^1(U)$, it is bounded from above via the \emph{log-Sobolev inequality} (see e.g. \cite{EY87} for a direct proof)
	 \begin{equation}\label{eq:logsob}
	 	H(\rho|\rho_\infty) \leq \frac{L^2}{4\pi^2} \int_{U} |\nabla_x \log(\rho)|^2 \rho \,\txtd x.
	 \end{equation}
	 
	 For the existence of classical solutions to the homogeneous equation we refer to following result.
  
	 \begin{theorem}[\!\!{\cite[Theorem~2.2]{CaGyPaSc}} adapted from {\cite[Theorem~3.12]{BQQLC}}] For $\rho_0 \in H^{3 + d}(U) \cap \mathcal{P}_{\text{ac}}(U)$ there exists a unique classical solution $\rho \in C^1(0,\infty; C^2(U))$ of the homogeneous McKean--Vlasov equation \eqref{eq:mckeanPDE} such that $\rho(t,\cdot) \in \mathcal{P}_{\text{ac}}(U) \cap C^2(U)$ for all $t>0$. Additionally, it holds that $\rho(t,\cdot)>0$ and $H(\rho(t)|\rho_\infty)<\infty$ for all $t>0$.
	 \end{theorem}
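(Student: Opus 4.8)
I would treat \eqref{eq:mckeanPDE} as a semilinear, nonlocal parabolic equation whose linear part is the heat operator $\Delta_x$ on the boundaryless compact domain $U$, and exploit the smoothing of the heat semigroup via the mild (Duhamel) formulation
\[
\rho(t) = \txte^{t\Delta_x}\rho_0 + \kappa \int_0^t \txte^{(t-s)\Delta_x}\, \dive_x\big[\rho(s)\,(\nabla_x D \star \rho(s))\big]\,\txtd s ,
\]
setting up a Banach fixed-point argument in a space such as $C([0,T];H^{3+d}(U))$ for small $T$. The nonlinearity carries one spatial derivative (the divergence), but $\dive_x \txte^{\tau\Delta_x}$ gains a factor of order $\tau^{-1/2}$, which is integrable near $\tau=0$; this is exactly what lets the Duhamel map be a contraction and yields local existence and uniqueness. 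The convolution structure is favourable here: since $D\in\mathcal{W}^{2,\infty}(U)$, convolving with $\nabla_x D$ is a bounded, smoothing operation, so the drift $b[\rho]:=\kappa(\nabla_x D \star \rho)$ is at least as regular as $\rho$ and is controlled by weak norms of $\rho$.

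The structural observation for passing from local to global existence is that the equation is in divergence form, so integrating over $U$ gives $\frac{\txtd}{\txtd t}\int_U \rho\,\txtd x = 0$ and hence $\|\rho(t)\|_{L^1(U)}=\|\rho_0\|_{L^1(U)}=1$ for all $t$. This conserved mass tames the nonlinearity, since
\[
\|b[\rho(t)]\|_{W^{1,\infty}(U)} \le \kappa \|D\|_{\mathcal{W}^{2,\infty}(U)}\,\|\rho(t)\|_{L^1(U)} = \kappa\|D\|_{\mathcal{W}^{2,\infty}(U)},
\]
a bound uniform in time and independent of the solution. Freezing $b=b[\rho]$, the equation reads as the \emph{linear} parabolic problem $\partial_t\rho = \Delta_x\rho + b\cdot\nabla_x\rho + (\dive_x b)\rho$ with uniformly bounded coefficients, so Gr\"onwall applied to the $H^{3+d}$-norm shows that higher Sobolev norms grow at most exponentially and cannot blow up in finite time. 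This rules out finite-time blow-up of the local solution and extends it to $(0,\infty)$.

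Positivity and classical regularity then follow from this linear viewpoint. Regarding $\rho$ as a solution of the linear equation above with the now-known bounded coefficients $b$, the strong maximum principle on the torus $U$ upgrades $\rho_0\ge 0$, $\rho_0\not\equiv 0$, to $\rho(t,\cdot)>0$ for every $t>0$. Parabolic (Schauder) bootstrapping gives $\rho\in C^1((0,\infty);C^2(U))$, while the Sobolev embedding $H^{3+d}(U)\hookrightarrow C^2(U)$, valid since $3+d>2+\tfrac{d}{2}$, guarantees classical regularity up to $t=0$; together with mass conservation this yields $\rho(t,\cdot)\in\Pac \cap C^2(U)$. Finally, since $\rho(t,\cdot)$ is continuous on the compact set $U$ it is bounded, and with $\rho(t,\cdot)>0$ the integrand $\rho\log(\rho/\rho_\infty)$ is bounded, so $H(\rho(t)|\rho_\infty)<\infty$ for all $t>0$.

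I expect the main obstacle to be the local well-posedness step: choosing the function space so that the $\tau^{-1/2}$ singularity of $\dive_x\txte^{\tau\Delta_x}$ is balanced against the derivative in the nonlinear term while retaining enough regularity (the $H^{3+d}$ scale embedding into $C^2$) for a genuinely classical solution. Once local existence is established, the divergence structure and the resulting conserved-mass bound on the drift make the global extension, positivity, and qualitative properties comparatively routine.
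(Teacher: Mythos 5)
Your proposal is essentially correct, but it follows a genuinely different route from the one the paper relies on. The paper does not prove this theorem itself --- it imports it from \cite[Theorem~2.2]{CaGyPaSc} / \cite[Theorem~3.12]{BQQLC} --- and the proof strategy it adopts (and reproduces in the heterogeneous setting, in Proposition~\ref{prop:existence}) is an iteration of \emph{frozen-coefficient linear parabolic problems}: one solves $\partial_t\rho_n=\Delta_x\rho_n+\kappa\dive_x[\rho_n(\nabla_xD\star\rho_{n-1})]$, derives energy estimates by multiplying by $\rho_n$ and integrating, applies Gronwall to get uniform-in-$n$ bounds in $L^2(0,T;H^1(U))$, and passes to the limit via Aubin--Lions compactness, with bootstrapping to reach $H^{3+d}$ regularity and $\partial_t\rho\in C(0,\infty;C(U))$. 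You instead set up a mild (Duhamel) formulation and run a Banach fixed-point argument in $C([0,T];H^{3+d}(U))$, using the $\tau^{-1/2}$ smoothing of $\dive_x\txte^{\tau\Delta_x}$ to absorb the derivative in the nonlinearity. Both are standard and viable; the fixed-point route gives uniqueness and continuous dependence essentially for free and avoids compactness machinery, while the iteration-plus-energy-estimate route is the one that generalizes directly to the graphop setting of Section~\ref{sec:het}, where the only control on the interaction is through $A1_\Omega(\xi)$ and pointwise-in-$\xi$ energy bounds, and where a clean contraction estimate would be harder to organize.

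One step in your argument needs repair as written: you deduce $\|\rho(t)\|_{L^1(U)}=1$ from $\frac{\txtd}{\txtd t}\int_U\rho\,\txtd x=0$, but conservation of $\int_U\rho$ only equals conservation of the $L^1$-norm once you know $\rho\geq 0$, which you establish \emph{later} via the maximum principle; as stated, the uniform drift bound $\|b[\rho(t)]\|_{W^{1,\infty}}\leq\kappa\|D\|_{\mathcal{W}^{2,\infty}}$ is circular. The fix is routine: on the maximal existence interval the local solution lies in $C([0,T];H^{3+d}(U))\hookrightarrow C([0,T];L^1(U))$, so the drift is bounded (with a solution-dependent constant) on any compact subinterval; the strong maximum principle then gives $\rho(t,\cdot)>0$ there, whence $\|\rho(t)\|_{L^1}=\int_U\rho(t)=1$, and only \emph{then} does the uniform, solution-independent drift bound and the no-blow-up Gronwall argument close. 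With that reordering the proof is sound.
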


\begin{prop}[{\!\!\cite[Proposition 3.1]{CaGyPaSc}}]\label{prop:homodecay}
Let the interaction potential of the homogeneous McKean--Vlasov equation \eqref{eq:mckeanPDE} satisfy $D\in \mathcal{W}^{2,\infty}(U)$. Let $\rho$ be the classical solution with initial data $\rho_0 \in H^{3 + d}(U) \cap \mathcal{P}_{\text{ac}}(U)$ and $H(\rho_0|\rho_\infty)<\infty$. If further the coupling coefficient satisfies
\begin{equation}\label{eq:kappacond}
	\kappa < \frac{2 \pi^2}{L^2 \| \Delta_x D \|_{L^\infty(U)} },
\end{equation}
then the solution $\rho$ is exponentially stable in entropy with the decay estimate
\begin{equation}\label{eq:Hdecay}
	H(\rho(t)|\rho_\infty) \leq  \txte^{-\alpha t} H(\rho_0|\rho_\infty), \quad t\geq 0,
\end{equation}
where
\begin{equation*}
	\alpha:= \frac{4 \pi^2}{L^2} - 2\kappa \| \Delta_x D\|_{L^\infty(U)} > 0.
\end{equation*}
\end{prop}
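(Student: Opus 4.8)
The plan is to establish the differential inequality $\frac{\txtd}{\txtd t} H(\rho(t)|\rho_\infty) \leq -\alpha\, H(\rho(t)|\rho_\infty)$ and then conclude via Grönwall's lemma. All the manipulations below are legitimate thanks to the regularity $\rho \in C^1(0,\infty;C^2(U))$ with $\rho(t,\cdot)>0$ and $H(\rho(t)|\rho_\infty)<\infty$ furnished by the existence theorem, together with the fact that $U$ is a torus, so integration by parts produces no boundary terms.

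First I would differentiate the entropy along the flow. Writing $\log(\rho/\rho_\infty) = \log\rho + d\log L$ and using mass conservation $\frac{\txtd}{\txtd t}\int_U \rho\,\txtd x = 0$, one obtains
\[
\frac{\txtd}{\txtd t} H(\rho|\rho_\infty) = \int_U \partial_t \rho \, \log(\rho/\rho_\infty)\,\txtd x .
\]
Substituting \eqref{eq:mckeanPDE} and integrating by parts, the diffusion term $\Delta_x\rho$ yields the negative Fisher information $-\int_U |\nabla_x\log\rho|^2\rho\,\txtd x$, since $\nabla_x\log(\rho/\rho_\infty)=\nabla_x\rho/\rho$. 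The coupling term, after one integration by parts and a second one moving the divergence onto the potential, becomes $\kappa\int_U (\Delta_x D \star \rho)\,\rho\,\txtd x$.

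The main obstacle is bounding this coupling term by the entropy itself. A crude $L^\infty$ estimate only gives the constant $\kappa\|\Delta_x D\|_{L^\infty(U)}$, which is useless for decay. The key observation is that $\Delta_x D$ has zero mean on the torus, hence $\Delta_x D \star \rho_\infty = 0$ and $\int_U (\Delta_x D \star g)\,\txtd x = 0$ for $g := \rho - \rho_\infty$; this lets me replace both copies of $\rho$ by $g$, producing $\kappa\int_U (\Delta_x D \star g)\,g\,\txtd x$. Bounding the convolution kernel by $\|\Delta_x D\|_{L^\infty(U)}$ gives the estimate $\kappa\|\Delta_x D\|_{L^\infty(U)}\,\|\rho-\rho_\infty\|_{L^1(U)}^2$, and the CKP inequality \eqref{eq:CKP} converts this into $2\kappa\|\Delta_x D\|_{L^\infty(U)}\,H(\rho|\rho_\infty)$.

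Finally I would apply the log-Sobolev inequality \eqref{eq:logsob} to bound the Fisher information from below, $\int_U |\nabla_x\log\rho|^2\rho\,\txtd x \geq \frac{4\pi^2}{L^2} H(\rho|\rho_\infty)$. Combining the two estimates yields
\[
\frac{\txtd}{\txtd t} H(\rho|\rho_\infty) \leq -\Big(\tfrac{4\pi^2}{L^2} - 2\kappa\|\Delta_x D\|_{L^\infty(U)}\Big) H(\rho|\rho_\infty) = -\alpha\, H(\rho|\rho_\infty),
\]
with $\alpha>0$ precisely under the smallness condition \eqref{eq:kappacond}. Grönwall's lemma then delivers the claimed decay \eqref{eq:Hdecay}. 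I expect the zero-mean trick for the coupling term to be the only genuinely delicate point; the remaining steps are structural and follow the standard entropy-dissipation scheme.
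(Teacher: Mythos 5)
Your proposal is correct and follows essentially the same route as the paper's proof: differentiate the entropy, integrate by parts to obtain the Fisher information plus the coupling term $\kappa\int_U \rho(\Delta_x D\star\rho)\,\txtd x$, use the zero mean of $\Delta_x D$ to replace both copies of $\rho$ by $\rho-\rho_\infty$, then close the differential inequality with the CKP and log-Sobolev inequalities and Gr\"onwall's lemma. No substantive differences.
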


The detailed proof can be found in \cite[Proposition 3.1]{CaGyPaSc}. For later reference, we discuss the main steps in order to introduce the reader to the method and necessary inequalities that are generalized to the more involved heterogeneous interactions in Section~\ref{sec:het}.

\begin{proof}[Proof of Proposition \ref{prop:homodecay}] 
	Our goal is to bound the time derivative of a solution in relative entropy $H(\rho(t)| \rho_\infty)$  by a negative multiple of the relative entropy itself:
	\begin{equation}\label{eq:entineq}
		\frac{\txtd}{\txtd t}H(\rho(t)|\rho_\infty) \leq -\alpha H(\rho(t)|\rho_\infty), \quad t\geq 0,
	\end{equation}
with some $\alpha>0$.
Then Gronwall's Lemma provides us with an exponential decay rate $\alpha$ of solutions in relative entropy.
	
	To this end, we differentiate the relative entropy of solution $\rho$ with respect to time, which is possible as $\rho$ is a classical solution. Using equation \eqref{eq:mckeanPDE}, the general assumption $D\in \mathcal{W}^{2,\infty}(U)$ and integrating by parts leads to following two terms:
		\begin{align*} 
		\frac{\txtd}{\txtd t} H(\rho(t)|\rho_\infty) 	&= 
		\int_{U} \Big(\Delta_x \rho + \kappa \dive_x (\rho \nabla_x D \star \rho) )\Big) \log (\frac{\rho}{\rho_\infty})~\txtd x\\
		& \qquad + \underbrace{\int_{U} \rho \frac{\rho_\infty}{\rho} \frac{1}{\rho_\infty} \Big(\Delta_x \rho + \kappa \dive_x (\rho \nabla_x D \star \rho )\Big)~\txtd x}_{=0}
		\\
		&= - \int_{U} |\nabla_x \rho|^2\frac{1}{\rho}\, \txtd x - \kappa \int_{U} \rho \nabla_x D \star \rho (\frac{\rho_\infty}{\rho}) \nabla_x (\frac{\rho}{\rho_\infty})~\txtd x\\
		&=
		- \int_{U} |\nabla_x\log (\rho)|^2  \rho dx + \kappa  \int_{U} \rho (\Delta_x D \star \rho)~\txtd x.\numberthis \label{eq:dtHshort}
	\end{align*}
	The first term in \eqref{eq:dtHshort} can be estimated via the log-Sobolev inequality \eqref{eq:logsob}. Note that if the second term vanishes this estimate would provides us directly with an exponential decay result via Gronwall's lemma.
	
	To estimate the second term of interactions in \eqref{eq:dtHshort}, we replace $\rho$ by $\rho-\rho_\infty$ (possible due to $\int_{U} \Delta_x D(x) dx =  0$ following from the periodicity of $D$) and apply the CKP inequality \eqref{eq:CKP}
	to estimate:
	\begin{align*}
		\kappa \int_{U} \rho (\Delta_x D\star \rho) dx 
		&\leq \kappa \|\Delta_x D \|_{L^\infty(U)} \|\rho - \rho_\infty \|^2_{L^1(U)} \\
		&\leq 2\kappa  \|\Delta_x D \|_{L^\infty(U)}  H(\rho|\rho_\infty). \numberthis \label{eq:ckpest}
	\end{align*}
	
	In total, identity \eqref{eq:dtHshort} in combination with \eqref{eq:ckpest} yields
	\begin{equation}
		\frac{d}{dt} H(\rho|\rho_\infty ) \leq (-\frac{4 \pi^2}{L^2} + 2\kappa \|\Delta_x D\|_\infty) H(\rho|\rho_\infty),
	\end{equation}
	which proves the claimed result.
\end{proof}

\begin{remark}
	The condition on the coupling coefficient \eqref{eq:kappacond} can be further relaxed for coordinate-wise even interaction potentials, i.e.\ $D(\ldots,x_i,\ldots) =  D(\ldots, -x_i, \ldots)$ for $i=1,\ldots, d$. Then the term $\|\Delta_x D\|_{L^\infty(U)}$ can be replaced with $\|\Delta_x D_u\|_{L^\infty(U)}$, where $D_u$ is the interaction contribution of the Fourier coefficients with negative sign (see details in \cite{CaGyPaSc}).
\end{remark}

\begin{remark}\label{rem:classicalkuramoto}
	For $d=1$, that is $U = [-\frac L2, \frac L2]$, the noisy Kuramoto model\footnote{Note that the corresponding result in \cite{CaGyPaSc} includes the normalization term $\sqrt{\frac 2L}$ due to the definition via Fourier transform.} assumes an
	 interaction of form $D(x) = -\cos((\frac{2\pi}{L}) x)$. For this model, the above condition \eqref{eq:kappacond} simplifies to $\kappa < \frac12$.
\end{remark}

\section{Heterogeneous interaction patterns} \label{sec:het}
Before proving global stability for heterogeneous couplings, we first gather the necessary precise definitions for graphop McKean--Vlasov equations \eqref{eq:graphopmckean} and derived facts on graphops that are relevant to our setting. For an extensive introduction to graphops we refer to \cite{BaSz20}.

\begin{definition}\label{def:graphop}
~	\begin{itemize}
		\item 
	Let $(\Omega, \mathcal{A},\mu)$ be a Borel probability space with\footnote{If the support of $\mu$ is the whole set, a uniform-in-$\xi$ steady state is ensured.} $\supp{\mu} = \Omega$. Let further $L^p(\Omega):=L^p(\Omega,\mu)$, $p\in[1,\infty]$ be the corresponding real-valued Lebesgue spaces.
	
		\item We call \emph{$P$-operators} linear operators $A:L^\infty(\Omega) \to L^1(\Omega)$ with bounded norm
		\begin{equation*}
			\|A\|_{\infty \to 1}:= \sup_{v\in L^\infty(\Omega)}\frac{\|A v\|_{L^1(\Omega)}}{\|v\|_{L^\infty (\Omega)}}.
		\end{equation*}
		
		\item We call $P$-operators $A$ \emph{self-adjoint} if for all $f,g\in L^\infty(\Omega)$ it follows that
		\begin{equation*}
			(Af,g)_{L^2(\Omega)}:= \int_\Omega gAf~ \txtd\mu(\xi) = \int_\Omega f Ag~\txtd\mu(\xi)= (Ag,f)_{L^2(\Omega)}.
		\end{equation*}
	Note this definition of self-adjointness includes operators not necessarily acting on Hilbert spaces.
 \item A $P$-operator $A$ is $c$-\emph{regular} for some $c\in\R$ if $A1_\Omega = c 1_\Omega$.
		\item 
		An operator $A$ is said to be \emph{positivity preserving} if for any function $f\geq 0$ it follows that also $Af\geq 0$.
		
		\item \emph{Graphops} are positivity preserving and self-adjoint  $P$-operators.
		\item The graphop $A$ is associated with a \emph{graphon kernel} $W: \Omega\times \Omega\mapsto \R$, if
		\begin{equation}\label{eq:graphonA}
			(A\rho)(\xi):= \int_{\Omega} W(\xi,\tilde{\xi}) \rho(\tilde{\xi})~\txtd\mu(\tilde{\xi}).
		\end{equation}
		As $A$ is self-adjoint, it follows that the graphon is symmetric, i.e. $W(\xi,\txi) = W(\txi,\xi)$ for all $\xi,\txi \in \Omega$.
		
		\item The $P$-operator $A$ is of finite $(p,q)$-norm, where $1\leq p,q\leq \infty$, if the expression
		\begin{equation}\label{eq:pqnorm}
			\|A\|_{p \to q}:= \sup_{v\in L^\infty(\Omega)}\frac{\|A v\|_{L^{q}(\Omega)}}{\|v\|_{L^p (\Omega)}}
		\end{equation}
	is finite. By standard arguments, such an operator can be uniquely extended to a bounded operator $A:L^p(\Omega) \to L^q(\Omega)$, which again is a $P$-operator (or graphop if the original $A$ was). The set of such $P$-operators is denoted as $B_{p,q}(\Omega)$.
	\end{itemize}
\end{definition}

\begin{remark}
	As $(\Omega, \mathcal{A}, \mu)$ is a probability space we have $L^p(\Omega) \subseteq L^{p'}(\Omega)$ for $ 1\leq p' \leq p\leq \infty$. As a result, it holds true that $B_{p,q}(\Omega) \subseteq B_{p',q'}(\Omega) \subseteq B_{\infty,1}(\Omega)$ for $p'\geq p$, $q'\leq q$.
\end{remark}

\subsection{Existence of classical solutions}
The splay steady state of \eqref{eq:graphopmckean} is given by $\rho_\infty(x,\xi ):= \frac{1}{L^d}$. This means $\rho_\infty$  satisfies the stationary equation 
$$\kappa \dive_x\Big(\rho V[A](\rho)\Big)+ \Delta_x \rho = 0.$$
This can be seen as $\rho_\infty$ is constant in both $x$ and $\xi$ and thus
\begin{equation}
	\Delta_x D \star (A\rho_\infty)= A \rho_\infty \int_{U} \Delta_x D~\txtd x =  0
\end{equation}
where we again used the periodicity of the potential $D$.

\begin{definition}
For the probability space $(\Omega, \mathcal{A}, \mu)$ the \emph{relative entropy} (for heterogeneous coupling) is chosen as
\begin{equation}\label{eq:entropyxi}
	\hat{H}(\rho|\rho_\infty):= \int_{\Omega}\int_{U}  \rho \log(\frac{\rho}{\rho_\infty}) \, \txtd x ~\txtd\mu(\xi).
\end{equation}
\end{definition}
Observe that \eqref{eq:entropyxi} can be viewed as the average entropy over the heterogeneous node space. As for the homogeneous entropy $H(\rho|\rho_\infty)$ considered in \eqref{eq:entropy}, the CKP inequality \eqref{eq:ckpest} (now applied to the measure $\txtd x\times \txtd \mu$ on $U \times\Omega$) provides the lower bound
\begin{equation*}
	\|\rho - \rho_\infty \|_{L^1(U\times\Omega)} \leq \sqrt{2\hat{H}(\rho|\rho_\infty)}, \quad \rho \in \mathcal{P}_{\text{ac}}(U\times \Omega).
\end{equation*}
We assumed the measure of $L^1(\Omega,\mu)$ satisfies $\supp \mu = \Omega$, thus $\hat{H}(\rho|\rho_\infty ) = 0$ precisely when $\rho = \rho_\infty$.

\begin{definition}
We say an initial datum $\rho_0$ of \eqref{eq:graphopmckean} is \emph{admissible} if, for almost every $\xi\in\Omega$, it holds that $\rho_0(\cdot,\xi)\in H^{3+d}(U)\cap \Pac$. Furthermore, for each $x\in U$, we have $\rho_0(x,\cdot) \in L^\infty(\Omega,\mu)$.
\end{definition}

\begin{prop}\label{prop:existence}
	Let $\rho_0 $ be an admissible initial condition for the graphop McKean--Vlasov equation \eqref{eq:graphopmckean} with arbitrary graphop $A$. Then it follows that $\rho(\cdot,\cdot,\xi)$ is a unique classical solution\footnote{of equation \eqref{eq:graphopmckean} with $\xi$-fixed interaction term $V[A](\rho)(\cdot,\cdot,\xi)$).} for a.e.\ $\xi\in\Omega$. Furthermore, $\rho(t,\cdot,\xi) \in H^{3 + d}(U) \cap \mathcal{P}_{\text{ac}}(U)$, $\rho(t,\cdot,\cdot) \in \mathcal{P}_{\text{ac}}(U\times \Omega)$ and $\rho(t,\cdot,\cdot)>0$ for a.e.\ $\xi\in\Omega$ and all $t>0$.
\end{prop}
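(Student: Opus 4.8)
The plan is to reduce the existence statement to the homogeneous result (Theorem after the log-Sobolev inequality / Proposition~\ref{prop:homodecay}'s existence input) by freezing the graph variable $\xi$. The crucial observation is that in equation~\eqref{eq:graphopmckean} the operator $A$ acts only on the $\xi$-variable, so for each fixed $\xi$ the interaction term $V[A](\rho)(t,x,\xi)=\int_U \nabla_x D(x-\tilde x)(A\rho)(t,\tilde x,\xi)\,\txtd\tilde x$ is a nonlocal-in-$x$ but pointwise-in-$\xi$ vector field. If we could treat $(A\rho)(t,\cdot,\xi)$ as a prescribed function of $x$, then the $\xi$-slice of \eqref{eq:graphopmckean} would be a homogeneous-type McKean--Vlasov equation with a modified (and generally time-dependent) potential, and the cited existence theorem would apply slice-by-slice.

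First I would make precise the decoupling. The key structural feature is that the system is \emph{affine} in the graph direction: the nonlinearity $V[A](\rho)$ depends on $\rho$ through the linear operator $A$ and a convolution in $x$, but no $\xi$-derivatives or $\xi$-transport appear. So I would argue that the PDE preserves the $\xi$-fiber structure and that solving it amounts to solving, for a.e.\ $\xi$, the scalar-in-$\xi$ equation
\begin{equation*}
	\partial_t \rho(\cdot,\cdot,\xi) = \kappa \dive_x\big(\rho(\cdot,\cdot,\xi)\, V[A](\rho)(\cdot,\cdot,\xi)\big) + \Delta_x \rho(\cdot,\cdot,\xi).
\end{equation*}
The admissibility hypothesis gives exactly the regularity needed: $\rho_0(\cdot,\xi)\in H^{3+d}(U)\cap\Pac$ for a.e.\ $\xi$ supplies the initial datum for each fiber, while $\rho_0(x,\cdot)\in L^\infty(\Omega,\mu)$ ensures $A\rho_0$ is well-defined since $A$ maps $L^\infty(\Omega)$ into $L^1(\Omega)$. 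I would then invoke the cited existence theorem on each $\xi$-fiber to obtain $\rho(t,\cdot,\xi)\in C^1(0,\infty;C^2(U))$ with the stated positivity, probability-measure, and finite-entropy properties, and assemble these into a global solution; the joint claim $\rho(t,\cdot,\cdot)\in\Pac(U\times\Omega)$ follows from Fubini once each fiber integrates to $1$ and the $L^\infty(\Omega)$-bound is propagated.

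The main obstacle will be justifying that this fiberwise construction yields a genuine solution of the \emph{coupled} system rather than a family of decoupled problems. The subtlety is that $V[A](\rho)(t,x,\xi)$ depends on the whole profile $\rho(t,\cdot,\tilde\xi)$ through $A$, so the fibers are not actually independent: the ``potential'' seen by fiber $\xi$ is built from all other fibers. To handle this I would set up a fixed-point argument (Banach or Schauder) on a suitable space such as $C([0,T];L^\infty(\Omega;H^{3+d}(U)))$, where the map sends a candidate density to the unique fiberwise solution driven by the frozen interaction term $V[A](\rho)$. The contraction estimate would rest on the boundedness $\|A\|_{\infty\to 1}<\infty$ together with $D\in\mathcal{W}^{2,\infty}(U)$ to control the interaction term uniformly in $\xi$, and on the smoothing/regularity of the linear Fokker--Planck flow in $x$ to close the loop. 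One must also verify that the $L^\infty(\Omega)$-bound on $\rho_0(x,\cdot)$ propagates forward in time, which is where the structure of $A$ as a $P$-operator (and a maximum-principle or mass-conservation argument in $x$) enters. Propagation of positivity and the probability normalization in $x$ follow from the maximum principle and integration of the conservative right-hand side, exactly as in the homogeneous case.
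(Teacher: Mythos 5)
Your overall strategy --- freeze $\xi$, solve fiberwise, and handle the coupling through $A$ by an iteration --- is the same skeleton as the paper's proof, which runs a Picard iteration \eqref{eq:frozeneq} on the nonlinearity and solves a linear parabolic problem on each fiber. But there is a genuine gap in how you propose to close the loop. You want a fixed point on $C([0,T];L^\infty(\Omega;H^{3+d}(U)))$ and claim that $\|A\|_{\infty\to 1}<\infty$ together with $D\in\mathcal{W}^{2,\infty}(U)$ lets you ``control the interaction term uniformly in $\xi$.'' It does not: for an arbitrary graphop, $A$ only maps $L^\infty(\Omega)$ into $L^1(\Omega)$, so $(A\rho)(t,x,\cdot)$ is merely integrable in $\xi$ and the drift seen by fiber $\xi$ is controlled by $A1_\Omega(\xi)$, which is finite a.e.\ but in general unbounded over $\Omega$. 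Consequently the solution map does not preserve your $L^\infty(\Omega)$-in-$\xi$ space, no contraction constant uniform in $\xi$ exists, and the $L^\infty(\Omega)$-propagation you assert is exactly the extra conclusion that the paper only obtains in Corollary~\ref{cor:existence} under the additional hypothesis $A1_\Omega(\xi)\le c$. As stated, your argument proves the proposition only for that restricted class, not for an arbitrary graphop.

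The ingredient you are missing is the mass-conservation identity behind \eqref{eq:estimateonstar}: since each iterate is nonnegative with $\|\rho_{n-1}(t,\cdot,\xi)\|_{L^1(U)}=1$ and $A$ is positivity preserving and commutes with the $x$-integral, one gets
\begin{equation*}
\|\nabla_x D\star A\rho_{n-1}(t,\cdot,\xi)\|_{L^\infty(U)}\le \|\nabla_x D\|_{L^\infty(U)}\,\|A\rho_{n-1}(t,\cdot,\xi)\|_{L^1(U)}=\|\nabla_x D\|_{L^\infty(U)}\,A1_\Omega(\xi),
\end{equation*}
a bound that is independent of $n$ and $t$ and requires no quantitative $\xi$-regularity of the iterates at all. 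This is what lets the paper run the energy estimate and Gronwall fiberwise with a $\xi$-dependent constant $C(\xi,T)$, obtain uniform-in-$n$ compactness in $L^2(0,T;H^1(U))$ for a.e.\ fixed $\xi$, and then bootstrap as in the homogeneous case --- deliberately giving up any uniformity in $\xi$, which the proposition does not claim. If you replace your global-in-$\xi$ contraction space with this fiberwise estimate (or restrict your fixed-point space to a.e.-pointwise-in-$\xi$ statements), your proof goes through; the remaining assembly steps you describe (Fubini--Tonelli for $\rho(t,\cdot,\cdot)\in\mathcal{P}_{\text{ac}}(U\times\Omega)$, positivity, normalization) match the paper.
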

\begin{proof}
	We show that for each fixed $\xi\in\Omega$ and time $T>0$ the existence proof for a classical nonnegative solution of \cite{BQQLC} can be applied to our case. To see this, we show that the graphop $A$ still allows the necessary bounds for compactness arguments in norms w.r.t.\ $t$ and $x$. Consider the solution $\rho_n(t,x,\xi)$ for $n\in\N$ to the frozen linear equation with smooth initial data $\rho_0(x,\xi) \in \mathcal{P}_{\text{ac}}(U)\cap C^\infty(\overline{U})$:
	\begin{equation}\label{eq:frozeneq}
		\begin{cases}
			\partial_t\rho_n(t,x,\xi) = \Delta_x \rho_n(t,x,\xi) \\
			\qquad + \kappa \dive_x[\rho_{n}(t,x,\xi) \nabla_x D \star (A \rho_{n-1})(t,x,\xi) ], & t\in[0,T], x\in U,\\
			\rho_n(t,x,\xi) = \rho_n(t,x+Le_i,\xi), \quad t\in[0,T],& x\in \partial U,\\
			\rho_n(0,x,\xi) = \rho_0(x,\xi), &x\in U.
		\end{cases}
	\end{equation}
	For a.e.\ fixed $\xi\in\Omega$ the frozen equation can be solved with classical results of linear parabolic equations with bounded coefficients. This leads to a sequence of unique solutions $(\rho_n(\xi))_{n\in\N} \in \mathcal{P}_{\text{ac}}(U)\cap C^\infty(\overline{U} \times [0,T])$. We show that the sequence $(\rho_n)_{n\in\N}$ is bounded in the desired norms, by multiplying the linearized equation \eqref{eq:frozeneq} with $\rho_n(t,\xi)$ and integrating the $x$-variable. This leads to
	\begin{align*}
		\frac{\txtd}{\txtd t} \|\rho_n(t,\xi) \|^2_{L^2(U)} &+ 2\|\nabla_x \rho_n(t,\xi) \|^2_{L^2(U)} \\
		&\leq 2\kappa \int_U | \rho_n(t,\xi) \nabla_x D \star A\rho_{n-1}(\xi) \nabla_x \rho_n(t,\xi) | dx\\
		&\leq \frac{\kappa \eps^2}{2} \|\nabla_x \rho_n(t,\xi) \|^2_{L^2(U)} \\
		&\qquad + \frac{2\kappa }{\eps^2} \|\rho_n(t,\xi) \|^2_{L^2(U)} \|\nabla_x D \star A \rho_{n-1}(t,\xi) \|^2_{L^\infty(U)}\numberthis \label{eq:epsest}.
	\end{align*}
	To estimate the last term above, we use the fact that $A$, as a linear bounded operator acting solely on the network variable, commutes with the $x$-integral and that the frozen equation \eqref{eq:frozeneq} is mass preserving, i.e.\ $\|\rho_n(t,\xi)\|_{L^1(U)} = 1$ for $a.e.~\xi \in \Omega$ and all $t>0$:
	\begin{align*}
		\|\nabla_x D \star A \rho_{n-1}(t,\xi) \|^2_{L^\infty(U)} &\leq 
		\|\nabla_x D \|^2_{L^\infty(U)} \| A \rho_{n-1}(t,\xi) \|^2_{L^1(U)} \\
		&= \|\nabla_x D\|^2_{L^\infty(U)} (A  \| \rho_{n-1}(t,\xi) \|_{L^1(U)})^2 \\
		&= \|\nabla_x D\|^2_{L^\infty(U)} (A 1_\Omega(\xi))^2.\numberthis \label{eq:estimateonstar}
	\end{align*}
	Inserting this estimate in \eqref{eq:epsest} and choosing  $\eps = (2\kappa)^{-\frac12}$, we obtain
	\begin{align}
		\begin{aligned}\label{eq:dtrhon}
		\frac{d}{dt} \|\rho_n(t,\xi) \|^2_{L^2(U)} +& \|\nabla_x \rho_n(t,\xi) \|^2_{L^2(U)} \\
		&\leq \kappa^2 \|\nabla_x D\|^2_{L^\infty(U)} (A 1_\Omega(\xi))^2 \|\rho_n(t,\xi) \|^2_{L^2(U)}. 
		\end{aligned}
	\end{align}
	Gronwall's Lemma yields the upper bound
	\begin{equation}\label{eq:A1}
		\|\rho_n(t,\xi) \|^2_{L^2(U)} \leq C(\xi,T) \|\rho_0(\xi) \|^2_{L^2(U)},\quad n\in\N,
	\end{equation}
with 
\begin{equation*}
	C(\xi,T):= \exp\left(\kappa^2 \|\nabla_x D \|_{L^\infty(U)}^2 (A1_\Omega(\xi))^2 T\right),
\end{equation*}
	which is finite for $a.e.\ \xi\in\Omega$. Integrating \eqref{eq:dtrhon} w.r.t.\  the time-variable and using the bound \eqref{eq:dtrhon} yields a uniform-in-$n$ bound for $(\rho_n(\xi))_{n\in\N}$ in $L^2(0,T,H^1(U))$. The further bootstrapping steps for initial datum of general regularity, uniqueness, desired solution regularity and positivity of solutions follow as described in \cite{CaGyPaSc, BQQLC}.
	Let us comment on the specific requirement $\rho_0 \in H^{3+d}(U)$: It is needed in order to show, using the equation's specific structure, that $\partial_{t}\rho \in L^2(0,\infty;H^{2+d}(U))$. As the embedding $H^{2+d}(U) \hookrightarrow H^{1+d}(U)$ is compact, it follows by Aubin-Lions Lemma that $\partial_t \rho \in C(0,\infty;H^{1+d}(U))$. Using the compactness of the Sobolev embedding one more time yields the desired $\partial_t \rho \in C(0,\infty; C(U))$.
	
	For each $t>0$ and a.e.\ $\xi\in\Omega$ the solution is mass preserving, hence $\rho(t,\cdot,\xi)\in \mathcal{P}_{\text{ac}}(U)$. As 
	\begin{equation*}
		\int_\Omega \int_U \rho(t,x,\xi) ~\txtd x ~\txtd\mu(\xi) = \int_\Omega 1~\txtd\mu(\xi) = 1,
	\end{equation*}
 Fubini-Tonelli's theorem implies that $\rho(t,\cdot,\cdot) \in \mathcal{P}_{\text{ac}}(U\times \Omega)$.	
%
%
\end{proof}
If we add further assumptions on the graphop, we obtain \emph{some} regularity in the $\xi$-variable.

\begin{corollary}\label{cor:existence}
	Let $\rho_0$ be an admissible initial datum for the graphop McKean--Vlasov equation \eqref{eq:graphopmckean} with graphop $A$.
	If $A\in B_{2,2}(\Omega)$ holds and $\hat{H}(\rho_0|\rho_\infty) < \infty$ then  $\hat{H}(\rho(t)|\rho_\infty)<\infty$ for all $t>0$. If $A1_\Omega(\xi) \leq c$ for almost every $\xi\in\Omega$ with some constant $c\geq 0$, then for all $t>0$ it holds that $\rho(t, \cdot,\xi) \in H^{3+d}(U)\cap \mathcal{P}_{\text{ac}}(U)$ for a.e.\ $\xi \in \Omega$ and $\rho(t, x,\cdot)\in L^\infty(\Omega)$ for all $x\in U$.
\end{corollary}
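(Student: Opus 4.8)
The plan is to treat the two assertions separately. For the first, I would show that the integrated relative entropy stays finite by deriving a linear differential inequality $\frac{\txtd}{\txtd t}\hat H(\rho(t)|\rho_\infty)\le C\,\hat H(\rho(t)|\rho_\infty)$ and invoking Gronwall's lemma, exactly as in Proposition~\ref{prop:homodecay} but now carrying the $\xi$-integral and the graphop interaction. First I would differentiate $\hat H$ in time --- legitimate by the classical $x$-regularity from Proposition~\ref{prop:existence} --- and integrate by parts in $x$ to obtain
\begin{equation*}
\frac{\txtd}{\txtd t}\hat H(\rho(t)|\rho_\infty) = -\int_\Omega\int_U |\nabla_x\log\rho|^2\rho\,\txtd x\,\txtd\mu(\xi) + \kappa\int_\Omega\int_U \rho\,(\Delta_x D\star(A\rho))\,\txtd x\,\txtd\mu(\xi),
\end{equation*}
the boundary and mass terms vanishing as before.

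The crux is estimating the interaction term, and this is where $A\in B_{2,2}(\Omega)$ enters. Using the periodicity $\int_U\Delta_x D=0$ I would replace $\rho$ by $\rho-\rho_\infty$ in both the inner convolution (since $A\rho_\infty$ is $x$-constant) and the outer factor. Writing $m(\xi):=\|\rho(\cdot,\xi)-\rho_\infty\|_{L^1(U)}$, the pointwise bound $|A h|\le A|h|$ coming from positivity preservation, together with the fact that $A$ commutes with the $\tilde x$-integral, gives $\int_U|A(\rho-\rho_\infty)(\tilde x,\xi)|\,\txtd\tilde x \le (Am)(\xi)$, so the interaction term is controlled by $\kappa\|\Delta_x D\|_{L^\infty(U)}\,(Am,m)_{L^2(\Omega)}$. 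Cauchy--Schwarz and $A\in B_{2,2}(\Omega)$ yield $(Am,m)_{L^2(\Omega)}\le \|A\|_{2\to2}\|m\|_{L^2(\Omega)}^2$, and the CKP inequality applied in $x$ for each $\xi$ gives $\|m\|_{L^2(\Omega)}^2\le 2\hat H(\rho|\rho_\infty)$. Combined with the log-Sobolev inequality for the dissipation term, integrated over $\xi$, this produces the inequality with $C = -\tfrac{4\pi^2}{L^2}+2\kappa\|\Delta_x D\|_{L^\infty(U)}\|A\|_{2\to2}$; since finiteness --- not decay --- is all that is claimed, the sign of $C$ is irrelevant and Gronwall gives $\hat H(\rho(t)|\rho_\infty)\le \txte^{Ct}\hat H(\rho_0|\rho_\infty)<\infty$.

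For the second assertion I would revisit the a priori estimate in the proof of Proposition~\ref{prop:existence}. The only place the network variable enters the constants is through the factor $A1_\Omega(\xi)$ in $C(\xi,T)=\exp(\kappa^2\|\nabla_x D\|_{L^\infty(U)}^2(A1_\Omega(\xi))^2 T)$; the hypothesis $A1_\Omega(\xi)\le c$ bounds it by $C(T):=\exp(\kappa^2\|\nabla_x D\|_{L^\infty(U)}^2 c^2 T)$ uniformly in $\xi$. Consequently the $L^2(U)$ bound \eqref{eq:A1} and the subsequent bootstrapping steps propagating $H^{3+d}(U)$ regularity all hold with $\xi$-uniform constants. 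Feeding this into the Sobolev embedding $H^{3+d}(U)\hookrightarrow C(\overline U)$ (valid as $3+d>d/2$) transfers the uniform-in-$\xi$ Sobolev control into pointwise bounds $\sup_x|\rho(t,x,\xi)|\le C_{\mathrm{emb}}\|\rho(t,\cdot,\xi)\|_{H^{3+d}(U)}$ that are uniform in $\xi$, and together with the admissibility of $\rho_0$ this yields $\rho(t,x,\cdot)\in L^\infty(\Omega)$ for every $x\in U$ and $t>0$; the $H^{3+d}(U)\cap\Pac$ membership is inherited from Proposition~\ref{prop:existence}.

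I expect two main obstacles. In the first part, the differentiation under the $\xi$-integral and the integration by parts must be justified even though finiteness of $\hat H(\rho(t)|\rho_\infty)$ is a priori known only for a.e.\ fixed $\xi$; I would close this with a continuation argument in $t$, using that $\hat H(\rho_0|\rho_\infty)<\infty$ and that the differential inequality forbids blow-up in finite time, or by truncating the entropy integrand and passing to the limit via monotone convergence. In the second part, the delicate point is bookkeeping the $\xi$-dependence through all bootstrapping steps of \cite{BQQLC} to confirm that $A1_\Omega(\xi)\le c$ is indeed the sole source of non-uniformity, and that the admissible initial data provide the matching $\xi$-uniform bounds.
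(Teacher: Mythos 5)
Your proposal is correct and follows essentially the same route as the paper: the first assertion is obtained by running the entropy dissipation computation of Theorem~\ref{thm:heterodecaygraphop} (positivity preservation, CKP, log-Sobolev, and the $L^2(\Omega)$-boundedness of $A$) and observing that Gronwall yields finiteness irrespective of the sign of the resulting constant, while the second assertion follows by noting that $A1_\Omega(\xi)\le c$ makes the constant $C(\xi,T)$ in the a priori estimate of Proposition~\ref{prop:existence} uniform in $\xi$, so the bootstrapping propagates $\xi$-uniform bounds. Your explicit Sobolev-embedding step to pass from uniform $H^{3+d}(U)$ control to $\rho(t,x,\cdot)\in L^\infty(\Omega)$ merely spells out what the paper leaves implicit.
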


\begin{proof}
\new{The proof of $\hat{H}(\rho(t)|\rho_\infty)<\infty$ for all $t>0$, assuming} $A\in B_{2,2}(\Omega)$ is deferred to the proof of Theorem~\ref{thm:heterodecaygraphop} as the steps are identical but finiteness works for arbitrary $\kappa\geq 0$.

If $A1_\Omega (\xi)\leq c$, then the estimate \eqref{eq:estimateonstar} can be refined further by
\begin{align*}
	\|\nabla_x D \star A \rho_{n-1}(\xi) \|^2_{L^\infty(U)} &\leq  \|\nabla_x D\|^2_{L^\infty(U)} c^2.
\end{align*}
Plugging this into \eqref{eq:A1} results in
\begin{equation*}
	\sup_{\xi\in\Omega}  \|\rho_n(t,\xi) \|^2_{L^2(U)} \leq \exp\left[c^2 \|\nabla_x D \|^2_{L^\infty(U)} t\right] \sup_{\xi\in\Omega} \|\rho_0(\xi ) \|_{L^2(U)}
\end{equation*}
and proves the claim.
\end{proof}
\begin{remark}
    The assumption $A1_\Omega(\xi) \leq c$ of Corollary~\ref{cor:existence} clearly includes $c$-regular graphops with any $c\geq 0$.
\end{remark}

\begin{remark}
	The regularity results of Proposition~\ref{prop:existence} and Corollary~\ref{cor:existence} with regards to the network variable $\xi$ are expected to be improvable. One would hope that for the weaker assumption $\|A\|_{2\to2}<\infty$ that the solutions remain $L^2(\Omega)$-integrable for all times. However, the above estimates \eqref{eq:dtrhon} do not provide such a bound. Part of the difficulty is rooted in the fact that the evolution equation \eqref{eq:graphopmckean} provides no explicit regularizing term over time in the $\xi$-variable and there is no mixing (and not even a geometrical link) of the $x$ and $\xi$ directions. This is reminiscent of a parabolic equation with degenerate diffusion (in $\xi$) for which no hope of a coupling mechanism is present \cite{villani09}.
 
 Similar existence analysis for the nonlinear heat equation on sparse graphs \cite[Section~3]{med2017} and the Kuramoto model \cite{med2019} show $L^2(\Omega)$ regularity of solutions. However, also these existence results are restricted to the cases of $c$-regular graphops.
\end{remark}
\subsection{Global stability}
Before establishing the global convergence result for solutions to \eqref{eq:graphopmckean} in entropy \eqref{eq:entropyxi}, we recall the definition of the numerical radius of an operator. This quantity allows the sharpest formulation of the convergence rate which is possible with our method.
\begin{definition}
\item The \emph{numerical radius} of a graphop $A\in B_{2,2}(\Omega)$ is given as
\begin{equation*}
	n(A):=\sup \{(Af,f) \mid f\in L^2(\Omega), \|f\|_{L^2(\Omega)} = 1 \}.
\end{equation*}
\end{definition}
For graphops in $B_{2,2}(\Omega)$ the numerical radius is an equivalent norm to the operator norm\footnote{The second inequality is generally wrong on real Hilbert spaces. However for the extension of $L^2(\Omega)$ to a complex Hilbert space it can be shown by the Polarization Identity. As graphops are symmetric, the resulting upper bound remains valid on the restriction to the real-valued Hilbert space $L^2(\Omega)$. Additionally, the restriction yields the desired estimates as the complex Hilbert space numerical radius of a symmetric operator is identical with its real Hilbert space numerical radius, see \cite{brickman1961}. } with bounds:
\begin{equation}\label{eq:numericalequi}
	n(A) \leq \|A\|_{2\to2} \leq 2 n(A).
\end{equation}

\begin{theorem}\label{thm:heterodecaygraphop}
	Consider the graphop McKean--Vlasov equation \eqref{eq:graphopmckean} with any graphop $A$ that satisfies $n(A) < \infty$. Let $\rho_0$ be any admissible initial datum with $\hat{H}(\rho_0|\rho_\infty)<\infty$. Let further the coupling coefficient satisfy
	\begin{equation}\label{eq:kappacondgraphop}
		\kappa < \frac{2 \pi^2}{L^2 \| \Delta_x D \|_{L^\infty(U)} n(A)  },
	\end{equation}
Then, the classical solution $\rho$ is exponentially stable with the decay estimate
\begin{equation}\label{eq:Hdecaygraphop}
		\hat{H}(\rho(t)|\rho_\infty) \leq  \txte^{-\hat{\alpha}(A) t} \hat{H}(\rho_0|\rho_\infty), \quad t\geq 0,
	\end{equation}
	where
	\begin{equation*}
		\hat{\alpha}(A):= \frac{4 \pi^2}{L^2} - 2\kappa  \| \Delta_x D\|_{L^\infty(U)}n(A) > 0.
	\end{equation*}
\end{theorem}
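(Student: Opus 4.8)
The plan is to reproduce the entropy-dissipation argument of Proposition~\ref{prop:homodecay} at the level of the averaged entropy $\hat H$, producing a differential inequality $\frac{\txtd}{\txtd t}\hat H(\rho(t)|\rho_\infty) \le -\hat\alpha(A)\,\hat H(\rho(t)|\rho_\infty)$ and concluding by Gronwall's lemma. Since Proposition~\ref{prop:existence} furnishes a classical, strictly positive solution that is smooth in $x$ for a.e.\ $\xi$, I would first differentiate $\hat H$ in time, discard the vanishing mass term, and integrate by parts in $x$ fibrewise (periodicity kills the boundary terms), exactly as in \eqref{eq:dtHshort}. Because $A$ acts only on $\xi$ it passes through all $x$-operations, so the computation is formally identical and yields
\[
\frac{\txtd}{\txtd t}\hat H = -\int_\Omega\int_U |\nabla_x\log\rho|^2\rho\,\txtd x\,\txtd\mu(\xi) + \kappa\int_\Omega\int_U \rho\,\big(\Delta_x D\star (A\rho)\big)\,\txtd x\,\txtd\mu(\xi).
\]

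For the first (Fisher-information) term I would apply the log-Sobolev inequality \eqref{eq:logsob} for each fixed $\xi$ and integrate in $\mu$, giving the bound $-\frac{4\pi^2}{L^2}\hat H$. The whole difficulty is the interaction term $I:=\int_\Omega\int_U \rho\,(\Delta_x D\star A\rho)\,\txtd x\,\txtd\mu$. I would first center it: since $A\rho_\infty$ is constant in $x$ and $\int_U \Delta_x D\,\txtd x=0$, one has $\Delta_x D\star(A\rho)=\Delta_x D\star(A g)$ with $g:=\rho-\rho_\infty$, and the remaining $\rho_\infty$-contribution in the outer factor integrates to zero in $x$; thus $I=\int_\Omega\int_U g\,(\Delta_x D\star Ag)\,\txtd x\,\txtd\mu$.

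The key new step — where the numerical radius enters — is to estimate $I$ while keeping the operator structure intact. Bounding $|\Delta_x D|\le\|\Delta_x D\|_{L^\infty(U)}$ and factoring the resulting double $x$-integral, I would reduce to $|I|\le \|\Delta_x D\|_{L^\infty(U)}\int_\Omega \|(Ag)(\cdot,\xi)\|_{L^1(U)}\,m(\xi)\,\txtd\mu(\xi)$, where $m(\xi):=\|g(\cdot,\xi)\|_{L^1(U)}\ge 0$. The naive $L^\infty$-bound would normally destroy the sign information in $A$, but positivity preservation rescues it: from $|Af|\le A|f|$ together with the commutation of $A$ with the $x$-integral, one gets $\|(Ag)(\cdot,\xi)\|_{L^1(U)}\le (Am)(\xi)$, so that $|I|\le\|\Delta_x D\|_{L^\infty(U)}\,(Am,m)_{L^2(\Omega)}$. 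Since $m\ge 0$ and $A$ is positivity preserving we may apply the numerical-radius bound $(Am,m)_{L^2(\Omega)}\le n(A)\|m\|_{L^2(\Omega)}^2$, and the fibrewise CKP inequality gives $\|m\|_{L^2(\Omega)}^2=\int_\Omega\|\rho(\cdot,\xi)-\rho_\infty\|_{L^1(U)}^2\,\txtd\mu\le 2\hat H(\rho|\rho_\infty)$. Collecting terms yields $I\le 2n(A)\|\Delta_x D\|_{L^\infty(U)}\hat H$, hence $\frac{\txtd}{\txtd t}\hat H\le -\hat\alpha(A)\hat H$ with the stated $\hat\alpha(A)$, positive precisely under \eqref{eq:kappacondgraphop}.

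I expect the genuine obstacle to be this interaction estimate: finding the route that introduces $n(A)$ rather than the cruder $\|A\|_{2\to2}$, and the device is to pass to the nonnegative ``fibrewise mass'' $m$ via $|Af|\le A|f|$ so that the operator is tested against a single nonnegative function and the numerical radius applies directly. The remaining points are technical: justifying differentiation under the integral and the $A$-vs-$x$-integral interchange (as already used in \eqref{eq:estimateonstar}), that $m\in L^2(\Omega)$ so that the extension $A\in B_{2,2}(\Omega)$ — available because $n(A)<\infty$ — may act on it, and the claim, deferred from Corollary~\ref{cor:existence}, that $\hat H(\rho(t)|\rho_\infty)<\infty$ for all $t>0$; the last follows from the very same chain of estimates with $\kappa\ge 0$ arbitrary, which produces $\frac{\txtd}{\txtd t}\hat H\le C\hat H$ and hence finiteness on every finite time interval by Gronwall.
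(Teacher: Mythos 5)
Your proposal is correct and follows essentially the same route as the paper's proof: the fibrewise entropy-dissipation computation, log-Sobolev for the product measure, centering via periodicity of $D$, the reduction to the nonnegative fibrewise mass $m(\xi)=\|\rho(\cdot,\xi)-\rho_\infty\|_{L^1(U)}$ through positivity preservation ($|A\nu|\le A|\nu|$) and commutation of $A$ with the $x$-integral, the numerical-radius bound, the fibrewise CKP inequality, and Gronwall. Even the deferred finiteness claim from Corollary~\ref{cor:existence} is handled as the paper does.
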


\begin{proof}
	Our strategy is to generalize the proof of Proposition~\ref{prop:homodecay}.
	
	Let us first note that, as $\|\rho(t,\xi)\|_{L^1(U)} =1$ for a.e.\ $\xi\in\Omega$ and $t>0$, the mapping $\xi \mapsto \|\rho(t,\xi) - \rho_\infty \|_{L^1(U)}$ has finite $L^p(\Omega)$-norm, for any $p\in[1,\infty]$. Specifically, $\xi \mapsto \|\rho(t,\xi) - \rho_\infty \|_{L^1(U)} \in L^2(\Omega)$.
	
	As $\rho(\xi)$ is a classical solution for a.e.\ $\xi$, it follows that the mapping $(t\mapsto H(\rho(t)|\rho_\infty)) \in C^1(0,\infty)$. Then, $(t\mapsto\hat{H}(\rho(t)|\rho_\infty))\in C^1(0,\infty)$ follows from the calculation below together with the $L^\infty(\Omega)$ bound noted above.  The same calculations as for \eqref{eq:dtHshort}, and again using the log-Sobolev inequality \eqref{eq:logsob} (for the  measure  $\txtd x\times \txtd\mu$), yields for $t\geq 0$
	\begin{equation}\label{eq:dtHhetero}
		\frac{d}{dt} \hat{H}(\rho|\rho_\infty) \leq - \frac{4\pi^2}{L^2}\hat{H}(\rho|\rho_\infty) + \kappa  \int_{\Omega}\int_{U} \rho [\Delta_x D \star (A\rho)] ~\txtd x ~\txtd\mu(\xi).
	\end{equation}
	
	To estimate the second term in \eqref{eq:dtHhetero}, we use the fact that $A$, as a linear bounded operator acting solely on the network variable, commutes with the $x$-integral:
		\begin{equation}
		\Delta_x D \star (A\rho)= A\,(\Delta_x D \star \rho).
	\end{equation}
For the special case $\rho \equiv \rho_\infty$, we even have
	\begin{equation}
		\Delta_x D \star (A\rho_\infty)= A \rho_\infty \int_{U} \Delta_x D ~\txtd x =  0,
	\end{equation}
where the last equality follows again from the periodicity of $D$.	Hence, we can replace both occurrences of $\rho$ by $\rho-\rho_\infty$ and use H\"older's inequality in $U$ with $p = 1$ and $p^* = \infty$ to estimate
	\begin{align*}
		\kappa \int_{\Omega}&\int_{U} \rho (\Delta_x D \star (A\rho) )~ \txtd x ~\txtd\mu(\xi) \\
		&\leq \kappa \|\Delta_x D \|_{L^\infty(U)} \int_{\Omega}  \|A[\rho - \rho_\infty]\|_{L^1(U)} \|\rho - \rho_\infty \|_{L^1(U)}~\txtd\mu(\xi).\numberthis \label{eq:secondtermest}
	\end{align*}
	Given that a graphop $A$ preserves positivity and denoting $\nu_\pm = \mp\max\{0,\pm \nu\}$ it follows that $$|A\nu| = |A\nu_+ - A\nu_-| \leq |A\nu_+| + |A\nu_-| = A\nu_+ + A\nu_- = A|\nu|.$$
	With this, we can estimate \eqref{eq:secondtermest}, use the definition of the numerical radius of $A$, and then apply the CKP inequality  \eqref{eq:CKP} for a.e.\ $\xi\in \Omega$. This leads to
	\begin{align*}
		\kappa \int_{\Omega}&\int_{U} \rho (\Delta_x D \star (A\rho) ) ~\txtd x~\txtd\mu(\xi)\\
		&\leq \kappa \|\Delta_x D \|_{L^\infty(U)} \int_{\Omega}  (A\|\rho - \rho_\infty\|_{L^1(U)}) \|\rho - \rho_\infty \|_{L^1(U)} ~\txtd\mu(\xi)\\
	&\leq \kappa \|\Delta_x D \|_{L^\infty(U)} n(A)\int_{\Omega}  \|\rho - \rho_\infty\|_{L^1(U)}^2 ~\txtd\mu(\xi)\\
	&\leq 2\kappa \|\Delta_x D \|_{L^\infty(U)} n(A)\hat{H}(\rho|\rho_\infty).
\end{align*}
Finally, applying Gronwall's lemma to the estimation of \eqref{eq:dtHhetero} leads to the desired result.
\end{proof}

\begin{remark}	
	Theorem~\ref{thm:heterodecaygraphop} naturally includes graphops that satisfy the (stronger) condition $\|A\|_{p\to q}< \infty$ with $p<2$, $q>2$. In such cases the conditions of Theorem~\ref{thm:heterodecaygraphop} are still met since $n(A)\leq  \|A\|_{2\to 2} \leq \|A\|_{p\to q}< \infty$. But compared to the $(p,q)$-norms for $p\leq 2, q\geq 2$, the numerical radius always gives the sharpest estimates with regard to our line of estimations.
\end{remark}

\subsection{Limitations of the entropy method approach}\label{subsec:noL2onA}

	Generalizing the exponential stability of Theorem~\ref{thm:heterodecaygraphop} to graphops that do not have finite $\|A\|_{2\to 2}$ norm seems not feasible with our method of proof. This is due to the necessity of relating the time-derivative of the defined entropy back to the entropy itself \eqref{eq:entineq}. \new{Nonetheless, below we show that in these cases, solutions at least remain bounded for all times and are therefore \emph{not} exponentially unstable.}
 
 Let us look at the considerations in detail:
	\begin{enumerate}[label=(\roman*)]
		\item \label{item:noCA} For exponential decay, the proof of Theorem~\ref{thm:heterodecaygraphop} requires the existence of a constant $C(A)<\infty$ such that 
		\begin{align}\label{eq:CAestimate}
			\begin{aligned}
			\int_{\Omega} ( A\|\rho - \rho_\infty\|_{L^1(U)})& \|\rho - \rho_\infty\|_{L^1(U)} ~\txtd\mu(\xi)\\
			&\leq C(A) \int_{\Omega}  \|\rho - \rho_\infty\|_{L^1(U)}^2 ~\txtd\mu(\xi).
			\end{aligned}
		\end{align}
	This is necessary\footnote{The other option is to apply the CKP inequality to $\|\rho-\rho_\infty\|^2_{L^1(U\times\Omega)}$ for $\txtd x\times \txtd\mu$. This would also close the entropy inequality. But as $\|\rho-\rho_\infty\|^2_{L^1(U\times \Omega)} \leq \int_\Omega \|\rho-\rho_\infty \|^2_{L^1(\Omega)}~\txtd\xi$ one only obtains a constant $\tilde{C}(A)<\infty$ such that $C(A)\leq \tilde{C}(A)$.} to subsequently apply the CKP inequality \eqref{eq:CKP} for a.e.\ $\xi\in\Omega$  and close the differential inequality. But if $A$ is unbounded in $L^2(\Omega)$ no such constant can exist:
	
	Let us first assume $\|A\|_{2\to2}<\infty$. Then the numerical radius $n(A)$ is finite as well, due to the norm equivalence \eqref{eq:numericalequi}. From the definition of the numerical radius follows that $C(A) = n(A)$ is optimal in \eqref{eq:CAestimate}.
	
	Now if $A$ is an unbounded operator in $L^2(\Omega)$ with $\|A\|_{2\to2} = \infty$ this implies that the numerical radius $n(A)$ is also unbounded and hence no constant $C(A)<\infty$ exists to bound \eqref{eq:CAestimate}. This follows again from \eqref{eq:numericalequi}, as we can approximate $A$ by a sequence of bounded operators $A_n$ with growing operator norm such that $\lim_{n\to \infty} \|A\rho-A_n\rho\|_{L^2(\Omega)} = 0$ for all $\rho\in \mathcal{D}(A)\subseteq L^2(\Omega)$.

		\item If $\|A\|_{2\to 2}$ is unbounded one could still consider a H\"{o}lder inequality estimate.

	For the case that only the weaker condition $\|A\|_{p\to 2}<\infty$ for some $p>2$ is satisfied, the Cauchy-Schwarz inequality and $\|A\rho\|_{L^2(\Omega)} \leq \|A\|_{p\to 2} \|\rho\|_{L^p(\Omega)}$ yields 
		\begin{align*} \begin{aligned}
			&\int_{\Omega} ( A\|\rho - \rho_\infty\|_{L^1(U)}) \|\rho - \rho_\infty\|_{L^1(U)} ~\txtd\mu(\xi)\leq \\
			&\quad \|A\|_{p\to 2} \left(\int_{\Omega}  \|\rho - \rho_\infty\|_{L^1(U)}^{p} ~\txtd\xi\right)^{\frac1{p}}\left(\int_{\Omega}  \|\rho - \rho_\infty\|_{L^1(U)}^{2}~\txtd\mu(\xi)\right)^{\frac1{2}}.
		\end{aligned}
	\end{align*}
However, as $p>2$ the $L^p(\Omega)$-term cannot be bounded by an $L^2(\Omega)$-term and hence the differential inequality cannot be closed by the CKP inequality.

Similarly, if only the condition $\|A\|_{2\to q}<\infty$ for some $q<2$ holds true, then H\"{o}lder's inequality with $\frac1q + \frac1{q^*} = 1$ leads to a $L^{q^*(\Omega)}$-norm factor and as $q^* >2$ again it cannot be bounded by $L^2(\Omega)$.

\item Let us consider to modify the entropy functional \eqref{eq:entropyxi}. Instead of integrating $\xi$ with respect to the underlying measure of the probability space $(\Omega,\mathcal{A}, \mu)$, we could consider an additional probability Borel measure $\tilde{\mu}$ on $\Omega$:
\begin{equation}\label{eq:entropymod}
\hat{H}_{\tilde{\mu}}(\rho|\rho_\infty):=\int_\Omega \int_U \rho \log(\frac{\rho}{\rho_\infty})~\txtd x~\txtd\tilde{\mu}(\xi).
\end{equation}
Then closing of the differential inequality for $H_{\tilde{\mu}}(\rho|\rho_\infty)$ along the proof of Theorem \ref{thm:heterodecaygraphop} requires a constant $\tilde{C}(A)<\infty$ such that
	\begin{align}\label{eq:CAtilde}
	\begin{aligned}
		\int_{\Omega} ( A\|\rho - \rho_\infty\|_{L^1(U)}) &\|\rho - \rho_\infty\|_{L^1(U)}~\txtd\tilde{\mu}(\xi)
	\\ &\leq \tilde{C}(A) \int_{\Omega}  \|\rho - \rho_\infty\|_{L^1(U)}^2 ~\txtd\tilde{\mu}(\xi).
	\end{aligned}
\end{align}
This is possible if and only if $A$ has a bounded numerical radius in the $L^2(\Omega, \tilde{\mu})$ sense. For the counter example of power law graphons without finite $L^2(\Omega)$ operator norm, we show in \S \ref{subsec:powerlaw} that no reasonable measure $\tmu$ can help.

Note that in general $A$ is not necessarily a graphop in $L^2(\Omega,\tilde{\mu})$ as it might not even be symmetric in $L^2(\Omega,\tilde{\mu})$. Thus, the numerical radius equivalence \eqref{eq:numericalequi} is not guaranteed, however the lower bound $r_{\tilde{\mu}}(A)\leq \|A\|_{2\to2,\tilde{\mu}}$ still holds.

\item The above does not exclude the possibility that completely different approaches can lead to stability results for more general graphops. For example a different entropy functional or a different structuring of the graph and position dependent interaction term in specific cases.
\end{enumerate}

If we drop the goal of exponential decay, we can still infer that the steady state $\rho_\infty$ is \emph{not} exponentially unstable. 
\begin{corollary}\label{cor:solbounded}
    Let the assumptions and notation of Theorem~\ref{thm:heterodecaygraphop} be given but let $A$ be an arbitrary graphop. Then, classical solutions $\rho$ to \eqref{eq:graphopmckean} \new{are bounded for all times, i.e.\
    \begin{equation}
        \hat{H}(\rho(t)|\rho_\infty) \le c,\quad t\geq 0
    \end{equation}
    with a constant $c\geq 0$ which depends on the equation coefficients and $\hat{H}(\rho_0|\rho_\infty).$}
\end{corollary}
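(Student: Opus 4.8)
The plan is to rerun the proof of Theorem~\ref{thm:heterodecaygraphop}, but to replace the numerical-radius estimate---which needs $n(A)<\infty$---by one that uses only the defining $P$-operator bound $\|A\|_{\infty\to 1}$, which is finite for \emph{every} graphop. Rather than differentiating $\hat H$ directly (which would presuppose $\hat H(\rho(t)|\rho_\infty)<\infty$, not available without $A\in B_{2,2}$), I would work at the level of the frozen, per-$\xi$ entropy $H(\rho(t,\xi)|\rho_\infty)$, which is finite and $C^1$ in $t$ for a.e.\ $\xi$ by Proposition~\ref{prop:existence} together with the homogeneous theory. Fixing $\xi$ and repeating the computation leading to \eqref{eq:dtHhetero}---integration by parts, the log-Sobolev inequality \eqref{eq:logsob}, and H\"older in $U$ with exponents $1,\infty$ together with $|A\nu|\le A|\nu|$---gives the pointwise-in-$\xi$ differential inequality
\begin{equation*}
\frac{\txtd}{\txtd t} H(\rho(t,\xi)|\rho_\infty) \le -\tfrac{4\pi^2}{L^2}\, H(\rho(t,\xi)|\rho_\infty) + \kappa\,\|\Delta_x D\|_{L^\infty(U)}\,(A g_t)(\xi)\,g_t(\xi),
\end{equation*}
where $g_t(\xi):=\|\rho(t,\xi)-\rho_\infty\|_{L^1(U)}$ and $\lambda:=4\pi^2/L^2$.

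The decisive observation is that the cross term can be bounded \emph{uniformly in time} by a constant. Mass preservation gives $\|\rho(t,\xi)\|_{L^1(U)}=1$ for a.e.\ $\xi$, hence $0\le g_t\le 2$ and $g_t\in L^\infty(\Omega)$ with $\|g_t\|_{L^\infty(\Omega)}\le 2$. Since $A$ preserves positivity, $Ag_t\ge 0$, so
\begin{equation*}
\int_\Omega (A g_t)\,g_t~\txtd\mu(\xi) \le 2\int_\Omega A g_t~\txtd\mu(\xi) = 2\,\|A g_t\|_{L^1(\Omega)} \le 2\,\|A\|_{\infty\to 1}\,\|g_t\|_{L^\infty(\Omega)} \le 4\,\|A\|_{\infty\to 1}.
\end{equation*}
Crucially this uses only that a graphop is a $P$-operator, so $\|A\|_{\infty\to 1}<\infty$ always, with no $L^2$-boundedness required. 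Solving the per-$\xi$ inequality by Gronwall and then integrating over $\Omega$ (Tonelli applies to the resulting nonnegative integrand) turns it into the scalar differential inequality
\begin{equation*}
\frac{\txtd}{\txtd t}\hat H(\rho(t)|\rho_\infty) \le -\lambda\,\hat H(\rho(t)|\rho_\infty) + K,\qquad K:=4\kappa\,\|\Delta_x D\|_{L^\infty(U)}\,\|A\|_{\infty\to 1}.
\end{equation*}

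A comparison argument (equivalently, Gronwall applied to $\txte^{\lambda t}\hat H$) then yields $\hat H(\rho(t)|\rho_\infty)\le \txte^{-\lambda t}\hat H(\rho_0|\rho_\infty)+\tfrac{K}{\lambda}(1-\txte^{-\lambda t})$, giving the claimed uniform bound with $c:=\hat H(\rho_0|\rho_\infty)+K/\lambda=\hat H(\rho_0|\rho_\infty)+\tfrac{L^2\kappa\,\|\Delta_x D\|_{L^\infty(U)}\,\|A\|_{\infty\to 1}}{\pi^2}$, which depends only on the equation coefficients and the initial entropy. I expect the main technical obstacle to be the justification of working per-$\xi$ and then integrating: one must check that $\xi\mapsto H(\rho(t,\xi)|\rho_\infty)$ is measurable and that the Gronwall representation may be integrated against $\txtd\mu$, since the finiteness $\hat H(\rho(t)|\rho_\infty)<\infty$ is \emph{not} granted a priori for an arbitrary graphop. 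The per-$\xi$ Gronwall bound is precisely what sidesteps this: it produces the majorant $\txte^{-\lambda t}H(\rho_0(\xi)|\rho_\infty)+\kappa\|\Delta_x D\|_{L^\infty(U)}\int_0^t\txte^{-\lambda(t-s)}(Ag_s)(\xi)\,g_s(\xi)~\txtd s$ for each $\xi$, whose $\mu$-integral is finite by the estimate above, thereby establishing finiteness and boundedness of $\hat H$ simultaneously.
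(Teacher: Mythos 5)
Your proof is correct, and it rests on the same key observation as the paper's: for an arbitrary graphop only the $P$-operator norm $\|A\|_{\infty\to 1}$ is guaranteed finite, and the uniform bound $\|\rho(t,\cdot,\xi)-\rho_\infty\|_{L^1(U)}\le 2$ lets you exploit it. The routes diverge in how the interaction term is closed. The paper bounds it by $\sqrt{8}\,\|A\|_{\infty\to 1}\sqrt{\hat{H}(\rho|\rho_\infty)}$ --- using the sup bound once and the CKP inequality \eqref{eq:CKP} once --- and then substitutes $v=\sqrt{\hat{H}}$ to get $\dot v\le -\tfrac{a}{2}v+b$ and the bound $\hat H(t)\le\max\{\hat H_0,(2b/a)^2\}$. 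You instead bound the interaction term by the time-independent constant $4\|A\|_{\infty\to 1}$ (sup bound on both factors plus positivity preservation), which gives a linear inequality with constant forcing and the bound $\hat H_0+K/\lambda$; this is slightly more elementary and avoids the square-root substitution, at the price of a marginally cruder constant. You also add something the paper's proof glosses over: since Corollary~\ref{cor:existence} only guarantees $\hat H(\rho(t)|\rho_\infty)<\infty$ for $A\in B_{2,2}(\Omega)$, differentiating $\hat H$ directly for an arbitrary graphop is not a priori justified, and your per-$\xi$ Gronwall majorant followed by Tonelli establishes finiteness and boundedness in one stroke. Both arguments yield a constant of the claimed form, so your version is a valid (and in the finiteness issue, more careful) alternative.
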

\begin{proof}
The proof of Theorem~\ref{thm:heterodecaygraphop} yields the estimate
\begin{align}
\begin{aligned}\label{eq:dHunstable}
  \frac{d}{dt} & \hat{H}(\rho|\rho_\infty) \leq   -\frac{4\pi^2}{L^2} \hat{H}(\rho|\rho_\infty)\\
  &  +  \kappa \|\Delta_x D \|_{L^\infty(U)} \int_{\Omega}  (A\|\rho - \rho_\infty\|_{L^1(U)}) \|\rho - \rho_\infty \|_{L^1(U)} ~\txtd\mu(\xi).
  \end{aligned}
\end{align}

In the following estimation we use the fact that the graphop $A$ satisfies $\|A\|_{\infty \to 1}<\infty$, that $\|\rho(\xi) - \rho_\infty \|_{L^1(U)} \leq 2$ for all $\xi\in\Omega$ due to $\rho(t,\cdot,\xi),\rho_0\in \Pac$ and the CKP inequality \eqref{eq:CKP}:
\begin{align*}
			&\int_{\Omega} ( A\|\rho - \rho_\infty\|_{L^1(U)}) \|\rho - \rho_\infty\|_{L^1(U)} ~\txtd\mu(\xi)\\
			&\quad \leq \|A\|_{\infty\to 1} \sup_{\xi\in\Omega} \|\rho - \rho_\infty\|_{L^1(U)}\left(\int_{\Omega}  \|\rho - \rho_\infty\|_{L^1(U)}~\txtd\mu(\xi)\right)
   \\
			&\quad \leq 2\|A\|_{\infty\to 1} \left(\int_{\Omega}  \|\rho - \rho_\infty\|_{L^1(U)}~\txtd\mu(\xi)\right)
    \\
			&\quad \leq \sqrt{8}\|A\|_{\infty\to 1} \sqrt{\hat{H}(\rho|\rho_\infty)}.\numberthis \label{eq:CKPunstable}
	\end{align*}
\new{Plugging \eqref{eq:CKPunstable} into \eqref{eq:dHunstable} yields
\begin{equation}
 \frac{d}{dt} \hat{H}(\rho|\rho_\infty) \leq -\frac{4\pi^2}{L^2} \hat{H}(\rho|\rho_\infty) + \sqrt{8}\kappa \|\Delta_x D \|_{L^\infty(U)}\|A\|_{\infty\to 1} \sqrt{\hat{H}(\rho|\rho_\infty)}.
\end{equation}
Excluding the trivial case $\rho(t) = \rho_\infty$, we denote $v(t):= \sqrt{H(\rho(t)|\rho_\infty)} > 0$ and the non-negative constants $a:=\frac{4\pi^2}{L^2}$ and $b:= \sqrt{8}\kappa \|\Delta_x D \|_{L^\infty(U)}\|A\|_{\infty\to 1}$, we obtain the differential inequality
\begin{equation*}
	\dot{v}(t) \leq -\frac{a}{2} v(t) + b, \quad t\geq 0.
\end{equation*}
If for any $t\geq 0$ the right hand side is positive, this is equivalent to the bound $v(t) < \frac{2b}{a}$. For any other $t\geq 0$, $v$ is non-increasing, thus $v(t) \leq \max\{v(0), \frac{2b}{a}\}$ for all $t\geq 0$. As $v(t)^2 = \hat{H}(\rho(t)|\rho_\infty)$ the claimed result follows.}
\end{proof}
The result of Corollary~\ref{cor:solbounded} tells us that we are not too far from a global stability result for general graphops. This gives hope for an extension with an appropriate method.

\subsection{Global stability for graphons}\label{subsec:graphons}

Let us now discuss the special cases of graphops in the form of integral operators with associated graphons (cf. Definition~\ref{def:graphop}). 

\begin{definition}
	The \emph{graphon norm} is defined as
	\begin{equation}\label{eq:Wpnorm}
	\|W\|_p:= 
	\begin{cases}
		\left(\int_{\Omega}\int_{\Omega} W(\xi,\txi)^{p}~\txtd\txi ~\txtd\mu(\xi) \right)^{\frac1{p}},& p\in[1,\infty),\\
		\sup_{(\xi,\txi) \in \Omega^2} |W(\xi,\txi)|,& p=\infty.
	\end{cases}
\end{equation}
\end{definition}

Given a graphon $W$ that satisfies $\|W\|_p<\infty$, it follows that for the associated graphop $A_W$ we have $A_W\in B_{p,p^*}(\Omega)$ with $\|A\|_{p\to p^*} \leq \|W\|_p$. Additionally, for $p>1$ the operator $A_W$ is compact. Indeed, the boundedness follows by applying H\"{o}lder's inequality for $\frac1p + \frac1{p^*} = 1$:
\begin{align}\begin{aligned}
		\label{eq:graphonoperatornorm}
	\|A_W f \|^{p^*}_{p^*} &= \int_\Omega \left(\int_\Omega W(\xi,\txi) f(\txi) ~\txtd\mu(\txi)\right)^{p^*} \txtd\mu(\xi)\\
	&\leq  \|W\|_{p^*}^{p^*} \left(\int_\Omega f(\txi)^{p} ~\txtd\mu(\txi) \right)^{\frac{p^*}{p}}.
	\end{aligned}
\end{align}
The compactness of $A_W$ for the case $\|W\|_{p^*}<\infty$ for $p^*>1$ follows by finite rank approximation, e.g.\ an approximation of $W$ with polynomials which are dense in $L^{p^*}(\Omega\times \Omega)$.

Graphops that have a graphon density satisfying $\|W\|_{2}<\infty$ are specifically convenient to treat due to the underlying Hilbert space structure. As (positivity-preserving) \emph{Hilbert-Schmidt operators} their spectrum consists exclusively of the (positive) point spectrum, $\sigma(A)= \sigma_p(A)$, with the only possible accumulation point at $0$. Specifically, the spectral radius $\operatorname{rad}(A)$ of $A$ coincides with its numerical radius and bounds the operator norm:
\begin{equation*}
	\operatorname{rad}(A_W):=\sup\{|\lambda| \mid \lambda \in \sigma(A_W)\} = \lambda_{\max}^{A_W} = n(A_W)
\end{equation*} 
where $\lambda_{\max}^{A_W}$ denotes the largest eigenvalue of $A_W$. 

Theorem~\ref{thm:heterodecaygraphop} directly implies the following:

\begin{corollary}\label{cor:heterodecay}
	Let the assumptions and notations of Theorem~\ref{thm:heterodecaygraphop} be given. Furthermore, let the graphop $A_W$ in \eqref{eq:graphopmckean} be associated to a graphon density $W:\Omega \times \Omega \to \R$, such that $\|W\|_{2}<\infty$. If the coupling coefficient satisfies
	\begin{equation}\label{eq:kappagraphon}
		\kappa < \frac{2 \pi^2}{L^2 \| \Delta_x D \|_{L^\infty(U)} \lambda_{\max}^{A_W}  },
	\end{equation}
then the solution $\rho$ to equation \eqref{eq:graphopmckean} is exponentially stable with the decay estimate
	\begin{equation}\label{eq:Hdecaygraphon}
		\hat{H}(\rho(t)|\rho_\infty) \leq  \txte^{-\hat{\alpha}(W) t} \hat{H}(\rho_0|\rho_\infty), \quad t\geq 0,
	\end{equation}
	where
	\begin{equation*}
		\hat{\alpha}(W):= \frac{4 \pi^2}{L^2} - 2\kappa  \| \Delta_x D\|_{L^\infty(U)}\lambda_{\max}^{A_W} > 0.
	\end{equation*}
\end{corollary}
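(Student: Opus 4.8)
The plan is to read Corollary~\ref{cor:heterodecay} as a direct specialization of Theorem~\ref{thm:heterodecaygraphop}, so that the whole argument reduces to two bookkeeping tasks: (i) checking that the hypotheses of the theorem are met for $A_W$, and (ii) identifying the numerical radius $n(A_W)$ with the largest eigenvalue $\lambda_{\max}^{A_W}$. Once these are in hand, the conclusion is obtained by substitution.

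First I would verify that $A_W$ falls under the scope of Theorem~\ref{thm:heterodecaygraphop}, i.e.\ that $n(A_W)<\infty$. Setting $p=p^*=2$ in the bound \eqref{eq:graphonoperatornorm} gives $\|A_W\|_{2\to 2}\leq \|W\|_2<\infty$, so $A_W\in B_{2,2}(\Omega)$; the norm equivalence \eqref{eq:numericalequi} then yields $n(A_W)\leq \|A_W\|_{2\to 2}<\infty$. Since $A_W$ is a graphop by assumption and $\rho_0$ is an admissible datum with $\hat{H}(\rho_0|\rho_\infty)<\infty$, all hypotheses of Theorem~\ref{thm:heterodecaygraphop} are satisfied.

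Next I would invoke the spectral identity $n(A_W)=\lambda_{\max}^{A_W}$ recorded immediately before the corollary. Because $\|W\|_2<\infty$, the operator $A_W$ is Hilbert--Schmidt, hence compact and self-adjoint on $L^2(\Omega)$, the self-adjointness coming from the symmetry of $W$. The spectral theorem for compact self-adjoint operators supplies an orthonormal eigenbasis with real eigenvalues accumulating only at $0$, and positivity preservation forces these eigenvalues to be nonnegative. Consequently the numerical range of $A_W$ is the interval $[\lambda_{\min}^{A_W},\lambda_{\max}^{A_W}]$, and the Rayleigh-quotient characterization gives $n(A_W)=\sup_{\|f\|_{L^2(\Omega)}=1}(A_W f,f)=\lambda_{\max}^{A_W}$.

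Finally, I would substitute $n(A_W)=\lambda_{\max}^{A_W}$ into the conclusion of Theorem~\ref{thm:heterodecaygraphop}: the coupling condition \eqref{eq:kappacondgraphop} becomes precisely \eqref{eq:kappagraphon}, the decay rate $\hat\alpha(A)$ becomes $\hat\alpha(W)$, and the estimate \eqref{eq:Hdecaygraphon} follows verbatim. I do not expect a genuine obstacle here, since the statement is a corollary; the only nontrivial ingredient is the spectral identity $n(A_W)=\lambda_{\max}^{A_W}$, and even that has been assembled in the discussion preceding the statement. The proof therefore amounts to collecting these facts and applying Theorem~\ref{thm:heterodecaygraphop}.
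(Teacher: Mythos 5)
Your proposal is correct and follows essentially the same route as the paper, which likewise deduces the corollary from Theorem~\ref{thm:heterodecaygraphop} together with the identity $\lambda_{\max}^{A_W} = n(A_W) = \|A_W\|_{2\to 2} \leq \|W\|_2$ for Hilbert--Schmidt graphops. One small caveat: positivity preservation does not force \emph{all} eigenvalues of $A_W$ to be nonnegative (a symmetric positivity-preserving kernel can have negative eigenvalues; only the top of the spectrum is nonnegative, by Krein--Rutman), but this does not affect your conclusion, since the identity $n(A_W)=\lambda_{\max}^{A_W}$ only requires the Rayleigh-quotient characterization of the largest spectral value of a compact self-adjoint operator.
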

\begin{proof}
The result follows directly from Theorem~\ref{thm:heterodecaygraphop} and \eqref{eq:graphopHS}.
\end{proof}

\begin{remark}
 As $A_W$ is symmetric in $L^2(\Omega)$ it follows that
\begin{equation}\label{eq:graphopHS}
\lambda_{\max}^{A_W} = n(A_W) = 	\|A_W\|_{2\to 2}  \leq  \|W\|_2.
\end{equation}   
Hence, we can use $\lambda_{\max}^{A_W} \leq \|W\|_2$ in the estimates of Corollary~\ref{cor:heterodecay}, which leads to less optimal but more accessible coupling conditions and decay estimates.
\end{remark}

\section{\new{Sakaguchi-Kuramoto model with frequency distribution}}\label{sec:saka}
Let us consider global stability of the splay state for a well-studied variant of the McKean--Vlasov equation \eqref{eq:mckeanPDE}: The \emph{Sakaguchi-Kuramoto mean-field equation} \cite{Saka} with intrinsic frequency distribution $g$. It is given as
\begin{equation*}
	\begin{aligned}\label{eq:sakaguchi}
		\partial_t \rho &= \partial_x(-\omega \rho + \kappa \rho V[A,g](\rho)) + \beta^{-1}\partial_{xx} \rho,\quad  t\geq 0,\\
		\rho(0) &= \rho_0,
	\end{aligned}
\end{equation*}
where a frequency-dependent transport term is added. The Vlasov term is dependent on the frequency density function $g$ via 
\begin{equation}\label{eq:freqvlasov}
	V[A,g](\rho):= \int_\R (\nabla D * A\rho) ~g \txtd\omega.
\end{equation}
We also explicitly include an inverse temperature parameter $\beta>0$ to the diffusion term in order to discuss the limit $\beta\to0+$.  In this case, solutions depend on $\rho(t,x,\xi,\omega)$ with $x\in [-\pi,\pi]$, $\xi\in\Omega$, and where $\omega\in\R$ is the frequency variable.
We consider frequency distributions according to a probability space $(\R, \B, g(\omega)\txtd\omega)$ with an arbitrary density function $\|g\|_{L^1(\R)} = 1$. Let us point out that replacing $g\txtd\omega$ with the Dirac distribution $\delta_0$, and choosing $\beta =1$ reduces the model again to \eqref{eq:graphopmckean}.

\subsection{\new{Homogeneous case}}\label{subsec:freqhomo}
For simplicity and in order to compare the result to the existing literature, let us first omit the additional network variable, i.e., we set $A=\id$ and consider initial data $\rho_0$ independent of $\xi$. We assume that $\int_U \rho_0(x,\omega) \txtd x = 1$ for all $\omega\in\R$ and as a result it holds that $\int_U \rho(t,x,\omega) \txtd x  = 1$ for all $\omega\in\R$, $t>0$.
Analogous to the extension to the entropy for graphop interactions \eqref{eq:entropyxi}, we can define the frequency-averaged relative entropy
\begin{equation*}
	\overline{H}(\rho|\rho_\infty) := \int_\R \int_U \rho \log(\frac{\rho}{\rho_\infty}) \txtd x  g\txtd\omega.
\end{equation*}
For each $\omega\in\R$ the transport term $\partial_t \rho = -\omega \partial_x(\rho)$ conserves the relative (spatial-)entropy  $H(\rho(
\omega)|\rho_\infty)$ (as defined in \eqref{eq:entropy}), hence the computation is completely analogous to the proof of Theorem~\ref{thm:heterodecaygraphop}. Specifically, the Vlasov term \eqref{eq:freqvlasov} corresponds to \eqref{eq:vlasovgraphop} with the all-to-all coupling graphop $A_g\rho := \int_\R \rho(\omega) g(\omega) \txtd\omega$ on the probability space $(\R, \B, g\txtd\omega)$. This amounts to a straightforward relabeling of the frequency variable $\omega$ as the network variable $\xi$. With the only difference being that it is not required for $\supp g$ to be $\R$, thus solutions and the steady state $\rho_\infty$ are naturally only relevant on the support of $g$.

The graphop $A_g$ can be expressed with the graphon $W(\omega,\tilde{\omega}) = 1$ with $\|W\|_{L^2(g\txtd\omega)}=1$. Thus Corollary~\ref{cor:heterodecay} (together with the spectral estimate \eqref{eq:graphopHS}) yields $g$-independent global stability for
\begin{equation}\label{eq:kappafreq}
	\kappa < \frac{2\pi^2}{L^2\beta  \| \partial_{xx} D \|_{L^\infty(U)} } =: \kappa_0(\beta),
\end{equation}
then the solution $\rho$ to equation \eqref{eq:graphopmckean} is exponentially stable with the decay estimate
\begin{equation*}
	\overline{H}(\rho(t)|\rho_\infty) \leq  \txte^{-\alpha t} \overline{H}(\rho_0|\rho_\infty), \quad t\geq 0,
\end{equation*}
where the $g$-independent decay rate is given as
\begin{equation*}
	\alpha:= \frac{4 \pi^2}{L^2\beta } - 2\kappa  \| \partial_{xx} D\|_{L^\infty(U)} > 0.
\end{equation*}
Using the CKP inequality \eqref{eq:CKP} on the product space $L^1(\R\times U, g\txtd\omega\times \txtd x ) =: L^1(g\txtd\omega \txtd x)$, this means we have the following type of estimate
\begin{equation*}
	\|\rho-\rho_\infty\|_{L^1(g\txtd\omega \txtd x )} \leq e^{-\frac{\alpha}{2} t} c,\quad t\geq 0,
\end{equation*}
for some constant $c >0$.
\begin{remark}
	Let us compare the stability estimates of \eqref{eq:kappafreq} with the established stability results in the literature with the standard setting $L=2\pi$ and $D(x)= -\cos(x)$. Sakaguchi \cite{Saka} as well as Strogatz and Mirollo \cite{strogatz91} have proven that the critical coupling strength ---  which marks the onset of synchronization phenomena and the loss of the stability of the incoherent steady state --- is given as
	\begin{equation*}
		\kappa_c(\beta) := 2\left[ \int_\R \frac{\beta^{-1}}{\beta^{-2} + \omega^2} g(\omega)\txtd\omega\right]^{-1}\stackrel{\beta \to \infty}{\longrightarrow} \frac{2}{\pi g(0)}.
	\end{equation*}
	Using H\"older's inequality, we see that
	\begin{equation*}
		\kappa_c(\beta) \geq 2\left[\int_\R g(\omega) \txtd\omega \sup_{\omega\in\R}  \frac{\beta^{-1}}{\beta^{-2} + \omega^2} \right]^{-1} = \frac{2}{\beta} > \frac{1}{2\beta} = \kappa_0(\beta),
	\end{equation*}
	where $\kappa_0(\beta)$ was defined in \eqref{eq:kappafreq}. Hence, while our results are independent of any specific choice of frequency distribution $g(\omega)\txtd\omega$, the global stability results are not sharp. It stays below the critical value for all $\beta >0$ and converges to $0$ as $\beta\to \infty$.
	\end{remark}

\subsection{\new{Heterogeneous case}}\label{subsec:hetero}
In order to consider the presence of both network structure and frequency distribution, we prove the following lemma formulated for general ``combined graphops''.
\begin{lemma}\label{lem:combined}
	Let graphop $A_i\in \mathcal{B}_{2,2}(\Omega_i)$ and underlying probability space $(\Omega_i,\mathcal{A}_i,\mu_i)$ be given for $i=1,2$ with numerical radius $n_{\mu_i}(A_i)$. Let $\rho_0(x,\cdot,\cdot) \in L^\infty(\Omega_1\times \Omega_2)$ for each $x\in U$. Consider McKean--Vlasov equations \eqref{eq:graphopmckean} with Vlasov terms of form
	\begin{equation}\label{eq:vlasovcombined}
		V[A_1,A_2](\rho)(x,\xi_1,\xi_2):= (\nabla D * A_1A_2\rho)(x,\xi_1,\xi_2).
	\end{equation}
	
	Then solutions  $\rho(t,x,\xi_1,\xi_2)$ with network variables $\xi_i\in \Omega_i$ for $i=1,2$ fulfill the global stability results of Theorem~\ref{thm:heterodecaygraphop}, setting $\mu=\mu_1\times \mu_2$ and replacing the quantity $n(A)$ with $n_{\mu_1}(A_1)n_{\mu_2}(A_2)$.
\end{lemma}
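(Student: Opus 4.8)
The plan is to reduce the statement to a direct application of Theorem~\ref{thm:heterodecaygraphop} on the product probability space $(\Omega_1\times\Omega_2,\mathcal{A}_1\otimes\mathcal{A}_2,\mu_1\times\mu_2)$ with the single combined operator $A:=A_1A_2$, and then to control its numerical radius by the product $n_{\mu_1}(A_1)\,n_{\mu_2}(A_2)$. Here $A_1$ acts only on $\xi_1$ and $A_2$ only on $\xi_2$, so as operators on $L^2(\mu_1\times\mu_2)$ they are $A_1\otimes\id$ and $\id\otimes A_2$; in particular they commute. Since $A_1A_2\in B_{2,2}(\Omega_1\times\Omega_2)$, the existence and finiteness statements needed to even speak of a classical solution with $\hat H(\rho_0|\rho_\infty)<\infty$ are inherited from Proposition~\ref{prop:existence} and Corollary~\ref{cor:existence} applied to $A=A_1A_2$ on the product space.

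First I would verify that $A=A_1A_2$ is itself a graphop in $B_{2,2}(\Omega_1\times\Omega_2)$. Boundedness is immediate from submultiplicativity, $\|A_1A_2\|_{2\to2}\le\|A_1\|_{2\to2}\|A_2\|_{2\to2}<\infty$. Positivity preservation holds because a composition of positivity-preserving maps is positivity preserving: if $\phi\ge0$ then $A_2\phi\ge0$ ($A_2$ acting in $\xi_2$ with $\xi_1$ frozen) and hence $A_1A_2\phi\ge0$. Self-adjointness follows from the self-adjointness of each factor together with Fubini: moving $A_1$ across the $\xi_1$-integral and then $A_2$ across the $\xi_2$-integral, and using $A_2A_1=A_1A_2$, yields $(A_1A_2 f,g)_{L^2(\mu_1\times\mu_2)}=(f,A_1A_2 g)_{L^2(\mu_1\times\mu_2)}$. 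This guarantees that the two structural ingredients of the proof of Theorem~\ref{thm:heterodecaygraphop} remain available: the pointwise bound $|A\nu|\le A|\nu|$ and the commutation of $A$ with the $x$-integral. Note also that $A\rho_\infty$ is constant in $x$, so $\Delta_x D\star(A\rho_\infty)=0$ still holds by periodicity of $D$.

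The key quantitative step is the numerical-radius bound $n_{\mu_1\times\mu_2}(A_1A_2)\le n_{\mu_1}(A_1)\,n_{\mu_2}(A_2)$. Since $A_1$, $A_2$ and $A_1A_2$ are all positivity-preserving self-adjoint operators, the Perron--Frobenius principle identifies the top of the spectrum with the spectral radius and with the operator norm, i.e.\ $n(A_i)=\operatorname{rad}(A_i)=\|A_i\|_{2\to2}$ and likewise $n(A_1A_2)=\|A_1A_2\|_{2\to2}$. Combining this with submultiplicativity gives
$$n_{\mu_1\times\mu_2}(A_1A_2)=\|A_1A_2\|_{2\to2}\le\|A_1\|_{2\to2}\|A_2\|_{2\to2}=n_{\mu_1}(A_1)\,n_{\mu_2}(A_2).$$
Equivalently one may argue spectrally via $\sigma(A_1\otimes A_2)\subseteq\sigma(A_1)\cdot\sigma(A_2)$, whence $\operatorname{rad}(A_1\otimes A_2)\le\operatorname{rad}(A_1)\operatorname{rad}(A_2)$. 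I expect this identification $n=\|\cdot\|_{2\to2}$ for graphops to be the main point to pin down carefully: for a general bounded self-adjoint operator one only has $n(A)\le\|A\|_{2\to2}\le 2n(A)$, and losing that factor would spoil the sharp product bound.

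Finally, I would invoke Theorem~\ref{thm:heterodecaygraphop} for the graphop $A=A_1A_2$ on the product space with $\mu=\mu_1\times\mu_2$. Because $n(A_1A_2)\le n_{\mu_1}(A_1)n_{\mu_2}(A_2)$, the smallness condition $\kappa<\tfrac{2\pi^2}{L^2\|\Delta_x D\|_{L^\infty(U)}\,n_{\mu_1}(A_1)n_{\mu_2}(A_2)}$ implies the hypothesis $\kappa<\tfrac{2\pi^2}{L^2\|\Delta_x D\|_{L^\infty(U)}\,n(A)}$ of the theorem, and the corresponding rate with $n(A)$ replaced by $n_{\mu_1}(A_1)n_{\mu_2}(A_2)$ is a valid (possibly non-sharp) lower bound for the true decay rate $\hat\alpha(A)$. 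This yields exactly the asserted exponential decay of $\hat H(\rho(t)|\rho_\infty)$ for the combined model \eqref{eq:vlasovcombined}.
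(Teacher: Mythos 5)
Your proposal is correct and follows essentially the same route as the paper: both reduce the statement to Theorem~\ref{thm:heterodecaygraphop} applied to the combined graphop $A_1A_2$ on the product space $(\Omega_1\times\Omega_2,\mu_1\times\mu_2)$ and then factor its numerical radius through $n_{\mu_1}(A_1)\,n_{\mu_2}(A_2)$. The only (immaterial) difference is in how that factorization is justified: the paper asserts it as an equality directly via Fubini, self-adjointness and the commutativity $A_1A_2=A_2A_1$, whereas you derive the (sufficient) inequality from the identity $n(A)=\|A\|_{2\to 2}$ for positivity-preserving self-adjoint operators together with submultiplicativity of the operator norm.
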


\begin{proof}
	In this setting, the relative entropy \eqref{eq:entropyxi} contains the product measure $\mu= \mu_1\times \mu_2$. Following the proof of Theorem~\ref{thm:heterodecaygraphop}, the only difference in estimating the time-derivative of the relative entropy is in estimating the interaction term in \eqref{eq:dtHhetero}.
	Denoting $f(t,\xi_1,\xi_2) := \|\rho(t,\xi_1,\xi_2)-\rho_\infty\|_{L^1(U)}$, we can estimate the second term as
	\begin{align*}
		\kappa& \int_{\Omega_2}\int_{\Omega_1}\int_{U} \rho (\Delta_x D \star (A_1A_2\rho) ) ~\txtd x~\txtd(\mu_1\times\mu_2)(\xi_1,\xi_2)\\
		&\leq \kappa \|\Delta_x D \|_{L^\infty(U)} n_{\mu_1\times \mu_2}(A_1A_2)  \|f\|^2_{L^2(\mu_1\times \mu_2)}\\
		&= \kappa \|\Delta_x D \|_{L^\infty(U)} n_{\mu_1}(A_1)n_{\mu_2}(A_2) \|f\|^2_{L^2(\mu_1\times \mu_2)}.
	\end{align*}
	The last equality can be validated via Fubini's theorem, the self-adjointness of $A_i$ in $L^2(\Omega_i)$ and the fact that $A_1A_2f = A_2A_1f$ as each operator is linear, bounded in $L^2(\Omega_i)$ and solely acts on its distinct variable.
\end{proof}
\begin{remark}
	Note that ``combined graphop'' interactions are different from multiplex networks which would correspond to $A_1 + A_2$ where both graphops act on the same network variable. In the Sakaguchi-Kuramoto model \eqref{eq:sakaguchi} we have one arbitrary graph structure and one all-to-all frequency coupling. This can be interpreted as a frequency-dependent coloring of the graph structure.
\end{remark}

With the realization of \S\ref{subsec:freqhomo} that intrinsic frequencies can be treated analogously to all-to-all coupling and with Lemma~\ref{lem:combined}, we are now able to obtain a global stability result for Sakaguchi-Kuramoto models with heterogeneous network interactions.

\begin{prop}
	Consider the Sakaguchi-Kuramoto model \eqref{eq:sakaguchi}  with arbitrary graphop $A\in B_{2,2}(\Omega)$, probability space $(\Omega,\mathcal{A}, \mu)$ and arbitrary frequency distribution $\|g\|_{L^1(\R)} = 1$. Then the global stability result of Theorem~\ref{thm:heterodecaygraphop} is fulfilled for the relative entropy 
	\begin{equation}
		\hat{H}_{g\times \mu}(\rho|\rho_\infty):= \int_\R\int_{\Omega}\int_{U}  \rho \log(\frac{\rho}{\rho_\infty}) \, \txtd x ~\txtd \mu(\xi)~g\txtd\omega .
	\end{equation}
\end{prop}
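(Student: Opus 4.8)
The plan is to combine the two preceding results---the observation from \S\ref{subsec:freqhomo} that an intrinsic frequency distribution behaves exactly like an all-to-all coupling graphop, and the combined-graphop machinery of Lemma~\ref{lem:combined}---so that the frequency variable $\omega$ and the network variable $\xi$ are treated as two independent graph-limit directions. Concretely, I would set $\Omega_1 := \Omega$ with its given measure $\mu_1 := \mu$ and graphop $A_1 := A$, and set $\Omega_2 := \R$ with the probability measure $\mu_2 := g\,\txtd\omega$ and the all-to-all coupling graphop $A_2 := A_g$, where $A_g\rho := \int_\R \rho(\omega)\, g(\omega)\,\txtd\omega$ as in \S\ref{subsec:freqhomo}. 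Since $A_g$ is associated to the constant graphon $W\equiv 1$ with $\|W\|_{L^2(g\,\txtd\omega)} = 1$, it lies in $B_{2,2}(\R, g\,\txtd\omega)$ with numerical radius $n_{\mu_2}(A_g) = 1$, and by hypothesis $A \in B_{2,2}(\Omega)$ with finite numerical radius $n_\mu(A)$. The Vlasov term \eqref{eq:freqvlasov} of the Sakaguchi-Kuramoto model is then precisely of the combined form \eqref{eq:vlasovcombined} with $A_1 A_2 = A\,A_g$, so Lemma~\ref{lem:combined} applies directly on the product space $\Omega \times \R$ with product measure $\mu \times g\,\txtd\omega$, yielding the entropy $\hat{H}_{g\times\mu}$ exactly as stated.

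The one genuinely new ingredient compared to a verbatim application of Lemma~\ref{lem:combined} is the extra transport term $-\omega\,\partial_x \rho$ and the inverse temperature $\beta$ in \eqref{eq:sakaguchi}. My first step would therefore be to recompute the time derivative of $\hat{H}_{g\times\mu}(\rho|\rho_\infty)$ along the lines of \eqref{eq:dtHshort} and \eqref{eq:dtHhetero}, checking that the transport contribution drops out: for each fixed $\omega$ and $\xi$, integrating $-\omega\,\partial_x\rho$ against $\log(\rho/\rho_\infty)$ over the periodic domain $U$ gives $\omega\int_U \partial_x\big(\rho\log(\rho/\rho_\infty) - \rho\big)\,\txtd x = 0$ by periodicity, so the frequency transport conserves the relative entropy exactly as noted in \S\ref{subsec:freqhomo}. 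The diffusion term contributes $-\beta^{-1}\int |\nabla_x\log\rho|^2\rho$, which the log-Sobolev inequality \eqref{eq:logsob} converts into the leading dissipative term $-\tfrac{4\pi^2}{\beta L^2}\hat{H}_{g\times\mu}$. What remains is the interaction term, which is handled verbatim by the combined-graphop estimate, producing the factor $\kappa\|\Delta_x D\|_{L^\infty(U)}\,n_\mu(A)\,n_{\mu_2}(A_g) = \kappa\|\Delta_x D\|_{L^\infty(U)}\,n_\mu(A)$ after bounding by twice the entropy via CKP \eqref{eq:CKP} applied for a.e.\ $(\xi,\omega)$.

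I would then assemble the differential inequality and invoke Gronwall, exactly as in Theorem~\ref{thm:heterodecaygraphop}, with the decay rate and coupling threshold inheriting the $\beta^{-1}$ scaling already recorded in \eqref{eq:kappafreq}; since $n_{\mu_2}(A_g)=1$ the frequency distribution $g$ drops out of both the rate and the threshold, matching the $g$-independence emphasized in \S\ref{subsec:freqhomo}.

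I expect the only real obstacle to be bookkeeping rather than mathematics: one must verify that the existence and regularity statements underpinning the entropy computation (classical solvability for a.e.\ fixed $(\xi,\omega)$, mass conservation $\|\rho(t,\xi,\omega)\|_{L^1(U)}=1$, and finiteness of $\hat{H}_{g\times\mu}(\rho(t)|\rho_\infty)$ for $t>0$) carry over to the model with the added transport term and the extra $\omega$-variable. This is where Proposition~\ref{prop:existence} and Corollary~\ref{cor:existence} must be re-examined: the frozen linear equation now carries a first-order $\omega$-dependent transport coefficient, but since $A_g 1 = 1$ and the transport term is conservative, the same $L^2$-energy estimates \eqref{eq:epsest}--\eqref{eq:dtrhon} close with at most an $\omega$-dependent constant, and Fubini-Tonelli over the product space $\R\times\Omega\times U$ gives $\rho(t,\cdot,\cdot,\cdot)\in\mathcal{P}_{\text{ac}}(U\times\Omega\times\R)$. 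Once these prerequisites are confirmed, the decay estimate follows mechanically, so I would present the proof as a short reduction to Lemma~\ref{lem:combined} together with the one-line verification that frequency transport is entropy-conserving.
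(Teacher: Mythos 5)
Your proposal is correct and follows essentially the same route as the paper: the paper's proof is a one-line application of Lemma~\ref{lem:combined} with $A_g$ on $(\R, g\,\txtd\omega)$ and $A$ on $(\Omega,\mu)$, relying on the observation from \S\ref{subsec:freqhomo} that the transport term conserves the spatial relative entropy. Your additional bookkeeping (the $\beta^{-1}$ scaling, entropy conservation of $-\omega\partial_x\rho$ by periodicity, and carrying the existence theory over to the extra $\omega$-variable) is exactly what the paper leaves implicit, and the symmetric relabeling of $A_1$ and $A_2$ is immaterial.
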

\begin{proof}
	We apply Lemma~\ref{lem:combined} with $A_1\rho = A_g\rho= \int_\R \rho(\omega) g(\omega)~\txtd\omega$, $\Omega_1 = \R$, $\txtd\mu_1 = g\txtd
	\omega$ as defined in \eqref{eq:freqvlasov} and $A_2=A$. Then, we obtain the global stability result of Theorem~\ref{thm:heterodecaygraphop} for the relative entropy $\hat{H}_{g\times \mu}(\rho|\rho_\infty)$.
\end{proof}

\begin{remark}
	The attentive reader might have noticed that for the here presented results, we do not require the frequency distribution to be absolutely continuous with respect to the Lebesgue measure. All results work for general Borel probability measures. Nonetheless, we choose to adhere to the in the literature established notation with a density function $g$ for direct comparison.
\end{remark}

The Sakaguchi-Kuramoto example has shown that the developed entropy method to prove global stability for the splay steady state is robust even when combining heterogeneous interactions with arbitrary intrinsic frequency distributions. Let us add that the versatility of entropy functionals has also recently been showcased by providing explicit stability estimates in the large coupling strength regime \cite{PM22}.

\section{\new{Graph examples}}\label{sec:examples}
In this section we consider solutions to \eqref{eq:graphopmckean} for explicit graph interaction structures and apply the established global stability results.
\subsection{\new{Spherical graphop}}
	Let us consider the \emph{spherical graphop} \cite{BaSz20}. It is an operator defined as
	\begin{equation*}
		A: L^2(\SS^2, \mu) \to L^2(\SS^2, \mu), \quad (A\rho)(\xi):= \int_{\xi^\perp} \rho ~\txtd\nu_\xi(\txi),
	\end{equation*}
	where $\SS^2:= \{ \xi \in \R^3 : |\xi|_2 = 1\}$, $\mu$ is the uniform probablity measure on $\SS^2$  and the integration takes place along the $\xi$-equator, defined as $\xi^
	\perp:= \{\txi \in \mathbb{S}^2 \mid \xi^T\txi = 0 \}$. For each $\xi\in \SS^2$, the measure $\nu_\xi$ denotes the uniform probablity measure on the (1-dim) submanifold $\xi^\perp$.

	As no density function exists with respect to a $\xi$-independent measure, this is a graphop that has no graphon representation. Furthermore, as the degree of each $\xi$ is not finite, it is also not representable as a graphing, but rather a more general graphop located ``in-between'' graphons and graphings \cite{BaSz20}. For discussion on its induced finite network structure and resulting numerical stability estimates, we refer to \cite{GJKM}.
	
	\begin{prop}\label{prop:spherical}
	Consider the McKean--Vlasov equation \eqref{eq:graphopmckean} with a spherical graphop in the Vlasov term. Then solutions are globally stable, provided
	\begin{equation}
		\kappa < \frac{2 \pi^2}{L^2 \| \Delta_x D \|_{L^\infty(U)}  },
	\end{equation}
	with the decay rate estimate $\hat{\alpha}(A):= \frac{4 \pi^2}{L^2} - 2\kappa  \| \Delta_x D\|_{L^\infty(U)}.$
	\end{prop}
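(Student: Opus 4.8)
The plan is to reduce Proposition~\ref{prop:spherical} to the single identity $n(A) = 1$ for the spherical graphop $A$. Comparing the coupling bound and the decay rate in the statement with those of Theorem~\ref{thm:heterodecaygraphop}, one sees that they coincide precisely under the substitution $n(A) = 1$; hence, once I verify $n(A) = 1$ (which in particular gives $A\in B_{2,2}(\Omega)$ and $n(A)<\infty$, so that the theorem applies), the proposition follows immediately. First I would record that $A$ is indeed a graphop in the sense of Definition~\ref{def:graphop}: it is positivity preserving, since integrating a nonnegative function against $\nu_\xi$ yields a nonnegative value, and it is self-adjoint, because the perpendicularity relation $\xi^{T}\txi = 0$ is symmetric and the whole construction is rotation invariant (see \cite{BaSz20}).

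The key structural observation is that $A$ is an averaging (Markov) operator: for each $\xi$ the measure $\nu_\xi$ is a probability measure on the equator $\xi^{\perp}$, so $A$ fixes constants, i.e.\ $A1_{\SS^2} = 1_{\SS^2}$ and $A$ is $1$-regular. Testing the numerical radius against the normalized constant (note $\|1_{\SS^2}\|_{L^2(\SS^2,\mu)} = 1$ because $\mu$ is a probability measure) gives the lower bound $n(A) \geq (A1_{\SS^2}, 1_{\SS^2})_{L^2} = 1$.

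The hard part, such as it is, will be the matching upper bound $n(A) \leq 1$, for which I would show $\|A\|_{2\to2} \leq 1$ using Jensen's inequality. Since $t \mapsto t^2$ is convex and $\nu_\xi$ is a probability measure, the pointwise bound $(Af(\xi))^2 \leq A(f^2)(\xi)$ holds. Integrating over $\SS^2$ and exploiting self-adjointness together with $1$-regularity then yields
\[
  \|Af\|_{L^2}^2 \leq \int_{\SS^2} A(f^2)~\txtd\mu = (A(f^2), 1_{\SS^2})_{L^2} = (f^2, A1_{\SS^2})_{L^2} = (f^2, 1_{\SS^2})_{L^2} = \|f\|_{L^2}^2,
\]
so $\|A\|_{2\to2} \leq 1$. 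Cauchy--Schwarz then gives $(Af, f)_{L^2} \leq \|Af\|_{L^2}\|f\|_{L^2} \leq 1$ for unit $f$, hence $n(A) \leq 1$, and combined with the lower bound this yields $n(A) = 1$.

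With $n(A) = 1$ in hand, the coupling condition and decay rate claimed in the proposition are exactly those delivered by Theorem~\ref{thm:heterodecaygraphop}, which completes the argument. I do not expect any deep difficulty: the only points needing care are confirming that $\nu_\xi$ is genuinely a probability measure (so that constants are preserved and Jensen applies) and invoking self-adjointness from \cite{BaSz20} --- both already built into the setup. One could even bypass self-adjointness in the upper bound, since the adjoint of $A$ also fixes constants by symmetry, so the same Jensen estimate gives $\|A\|_{2\to2} \leq 1$ regardless.
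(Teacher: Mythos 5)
Your proposal is correct and follows the same overall strategy as the paper: reduce the proposition to the bound $n(A)\leq\|A\|_{2\to2}\leq 1$ and invoke Theorem~\ref{thm:heterodecaygraphop}. The one step you handle differently is the identity $\int_{\SS^2}A(f^2)\,\txtd\mu=\|f\|_{L^2(\SS^2)}^2$: the paper obtains it by an explicit computation in spherical coordinates (parameterizing each equator and checking via Fubini that integrating $f^2$ over all equators reproduces the surface integral), whereas you derive it abstractly from $(A(f^2),1_{\SS^2})=(f^2,A1_{\SS^2})=(f^2,1_{\SS^2})$, using self-adjointness and $1$-regularity. Your route is cleaner and shows the stronger, reusable fact that \emph{every} self-adjoint Markov graphop on a probability space satisfies $\|A\|_{2\to2}\leq1$; the paper's route has the advantage of not presupposing self-adjointness, since the coordinate computation is precisely the verification that $\int_{\SS^2}Ag\,\txtd\mu=\int_{\SS^2}g\,\txtd\mu$ (i.e.\ $A^{*}1_{\SS^2}=1_{\SS^2}$), which is the nontrivial geometric input. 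Note that your closing remark that one ``could bypass self-adjointness because the adjoint also fixes constants by symmetry'' therefore does not actually avoid any work: establishing $A^{*}1_{\SS^2}=1_{\SS^2}$ rigorously is exactly the disintegration argument the paper carries out, and both you and the paper defer the full verification of self-adjointness to \cite{BaSz20}. One small technical point worth making explicit: the self-adjointness identity in Definition~\ref{def:graphop} is stated for $L^\infty$ functions, so you should first prove $\|Af\|_{L^2}\leq\|f\|_{L^2}$ for $f\in L^\infty(\SS^2)$ (where $f^2\in L^\infty$) and then extend by density to all of $L^2(\SS^2)$.
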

	\begin{proof}
	As a preparation, consider $f\in L^2(\SS^2, \mu)$, then, for each $\xi\in\SS^2$, we can find a (non-unique) parameterization of $f|_{\xi^\perp}$ as 
	\begin{equation}
		f_\xi(\tau) := f(\cos(\tau) v_1^\xi + \sin(\tau) v_2^\xi),\quad \tau\in [0,2\pi),
	\end{equation}
	where the vectors $\{\xi, v_1^\xi,v_2^\xi\}$ form an orthonormal basis of $\R^3$ for each $\xi\in\SS^2$. Further, we can transform each element of $\SS^2$ into spherical coordinates. We denote $\xi = \xi(\phi,\theta)$ with $\phi\in [0,\pi)$ and $\theta\in [0,2\pi)$ and transformation factor $|\sin(\theta)|$.
	In order to apply Theorem~\ref{thm:heterodecaygraphop} we show $\|A\|_{2\to 2} = 1$:

	With the considerations from above, we have
	\begin{align*}
		\|Af\|^2_{L^2(\SS^2)} &= \int_{\SS^2} \left(\int_{\xi^\perp} f~ \txtd\nu_\xi(\txi)\right)^2 \txtd\mu(\xi) = \int_{\SS^2} \left(\frac{1}{2\pi}\int_0^{2\pi} f_\xi(\tau)~\txtd\tau\right)^2 \txtd\mu(\xi)\\
		&\leq \int_{\SS^2} \frac{1}{2\pi}\int_0^{2\pi} f_\xi(\tau)^2~\txtd\tau~\txtd\mu(\xi) \\
		&= \frac{1}{2\pi} \int_0^{2\pi} \frac{1}{4\pi}\int_0^{\pi}
		\int_0^{2\pi} f_{\xi(\phi,\theta)}^2(\tau) |\sin(\theta)|~\txtd\tau ~\txtd\theta~\txtd\phi \\
		&= \frac{1}{2\pi} \int_0^{2\pi} \|f\|^2_{L^2(\SS^2)}~\txtd\phi = \|f\|^2_{L^2(\SS^2)}.
	\end{align*}
	For the inequality above, we used Cauchy-Schwarz. The second to last equality holds, as for any fixed $\phi \in [0,2\pi)$ the inner two integrations exactly integrate $f^2$ once over $\SS^2$. Self-adjointness can be shown in a similar way using the spherical coordinates and resulting symmetries. As one can validate that $A$ is a Markov graphop, i.e.\ satisfying $A1_{\SS^2} = 1_{\SS^2}$, it holds that $\|A\|_{2\to 2} = 1$.
	
	To estimate the convergence of solutions to the IVP \eqref{eq:graphopmckean} with a spherical graphop coupling, we can now directly apply Theorem~\ref{thm:heterodecaygraphop} with $n(A)\leq \|A\|_{2\to2} \leq 1$.
\end{proof}
	
	\begin{remark}
		The stability result of Proposition~\ref{prop:spherical} is identical to the case of all-to-all coupling given in Proposition~\ref{prop:homodecay}. We point out that the spherical graphop describes rather sparse interactions compared to an all-to-all coupling. Thus, it is worth investigating whether incorporating additional graphop observables into the analysis could clarify or sharpen the stability results. However, this is beyond the scope of the current study. We refer to future research and first numerical considerations of \cite{GJKM} which show improved convergence compared to the all-to-all coupled case. It is worth noting that  there is significant potential to develop reliable numerics by leveraging the particular graph structure of the dynamics \cite{bohle22}.
	\end{remark}
\subsection{Graphon Examples}

\subsubsection*{Erd\"os-R\'enyi random graph}
For the start let us mention Erd\"os-R\'enyi random graphs which take the simple graphon form $W(x,y) = p$ with $p\in(0,1)$. Then, we can apply Corollary~\ref{cor:heterodecay} for solutions to \eqref{eq:graphopmckean} with the Erd\"os-R\'enyi random graphop in the Vlasov interaction term. In particular \eqref{eq:Hdecaygraphon} provides the decay rate
	\begin{equation*}
	\hat{\alpha}(W):= \frac{4 \pi^2}{L^2} - 2\kappa  \| \Delta_x D\|_{L^\infty(U)} p > 0,
\end{equation*}
for solutions in relative entropy, given that the interaction coefficient fulfills $\kappa < \frac{2 \pi^2}{L^2 \| \Delta_x D \|_{L^\infty(U)} p }$. When compared to the homogeneous interaction case this decay aligns with the intuition, as the mean-field interaction strength $\kappa$ is simply reduced to $\kappa p$.

\subsubsection*{Power law random graphs}\label{subsec:powerlaw}

Let us now consider power law random graphs, as constructed in\cite{borgs2019}. They are important examples of intermediately sparse graphs that correspond to unbounded $L^p$ functions which the standard $L^\infty$ graphon convergence theory for dense graphs cannot handle.

To introduce power law graphs, let a set of $[N]$ vertices be given with $N\in\N$. For distinct indices $i,j\in[N]$, $i\neq j$, the vertices are connected, i.e.\ $A^{ij}=1$, with the probability
\begin{equation*}
	p(i,j) = \min\{1, N^\beta (ij)^{-\alpha}\},\quad \alpha \in (0,1), \beta \in (2\alpha -1 , 2\alpha).
\end{equation*}
This results in a superlinear expected number of edges and an expected edge density $N^{\beta - 2\alpha}$. To construct the empirical graphon $W^{(N)}_{\alpha,\beta}(\xi,\txi)$ and the finite particle interaction term  \eqref{eq:mckean}, one has to include the rescaling factor $r_N = N^{\beta - 2\alpha}$, see \cite{borgs2019,med2019}. Consequently, such graphs converge  to the \emph{power law graphon} for $N\to \infty$ (in the cut metric \cite{borgs2019} or graphop action sense \cite{BaSz20} which are equivalent in this case), denoted as:
\begin{equation}
	\label{eq:powerlawgraphon}
	W_\alpha(\xi,\txi):= (1-\alpha)^2 (\xi \txi)^{-\alpha},\quad \xi,\txi\in[0,1],\alpha \in (0,1).
\end{equation}
With the choice $\Omega = [0,1]$ and the Lebesgue measure $\mu = \lambda$, we represent the associated graphop for $\alpha \in (0,1)$ as:
\begin{align}\label{eq:powerlawgraphop}\begin{aligned}	
	A_\alpha:=A_{W_\alpha}: &\quad L^\infty([0,1], \lambda) \to L^1([0,1], \lambda)\\
&\quad f \mapsto (A_\alpha f)(\xi):= \int_{[0,1]} W_\alpha(\xi,\txi)  f(\txi) ~\txtd\txi.
	\end{aligned}
\end{align}
An increase in $\alpha\in (0,1)$ leads to a stronger localization of the power law graph around the origin $(0,0)$. 
While $W_\alpha$ is unbounded, for each fixed $\alpha\in(0,1)$ it is an $L^p([0,1]^2)$ graphon for $p\in [1,\frac1\alpha)$. Due to the bound \eqref{eq:graphonoperatornorm}, the associated graphop can be extended to an operator 
\begin{equation}
A_\alpha\in B_{p^*,p}(\Omega) \text{ for each } p \in [1,\textstyle\frac1\alpha)
\end{equation}
with $\|A_\alpha\|_{p^*\to p} \leq \|W_\alpha\|_{p}$.

In the case $\alpha \in (0,\frac12)$ it follows that the power law graphon satisfies $n(A) = \|A_\alpha \|_{2\to 2} \leq \|W_\alpha\|_{2} = \frac{(1-\alpha)^2}{(1-2 \alpha)}<\infty$ and Corollary~\ref{cor:heterodecay} provides the decay
\begin{equation*}
	\hat{H}(\rho(t)|\rho_\infty) \leq \txte^{-\hat{\alpha}(W_\alpha) t} \hat{H}(\rho_0|\rho_\infty),\quad t\geq 0,
\end{equation*}
with $\hat{\alpha}(W_\alpha):=  \frac{4 \pi^2}{L^2} - 2\kappa \| \Delta_x D\|_{L^\infty(U)} \frac{(1-\alpha)^2}{(1-2 \alpha)} > 0$ as long as the condition
\begin{equation*} \kappa < \frac{2 \pi^2(1-2 \alpha)}{L^2 \| \Delta_x D \|_{L^\infty(U)} (1-\alpha)^2}
\end{equation*}
is satisfied.

For fixed $\alpha \in [\frac12,1)$ it only follows that  $\|A_\alpha\|_{p^*\to p} \leq \|A_\alpha\|_{2\to p}<\infty$ with $p<\frac1\alpha \leq 2$ is guaranteed. In fact, as discussed in Section~\ref{subsec:noL2onA}, this is not sufficient to prove exponential decay with our method and entropy $\hat{H}(\rho|\rho_\infty)$. 

Finally, even modifying the measure $\tmu$ in the entropy, as defined in \eqref{eq:entropymod}, cannot resolve the issues for power law graphops with $\alpha>\frac12$. We prove this in the following lemma by showing that the necessary estimate \eqref{eq:CAestimate} (discussed in \S~\ref{subsec:noL2onA}) does not hold true for any $\tmu$ with $\supp \tmu = [0,1]$.
\begin{lemma}\label{lem:tmu}
	Let the probability space $([0,1], \mathcal{B}([0,1]), \lambda)$ with the power law graphop $A_\alpha$, as defined in \eqref{eq:powerlawgraphop} for $\alpha\in (\frac12,1)$ be given. Then no probability Borel measure $\tmu$ with $\supp \tmu = [0,1]$ exists such that 
	\begin{equation*}
		n_{\tmu}(A_\alpha) = \sup \{(A_\alpha f,f)_{L^2([0,1],\tmu)} \mid f \in L^\infty([0,1],\tmu), \|f\|_{L^2([0,1],\tmu)} = 1 \} < \infty.
	\end{equation*}

\end{lemma}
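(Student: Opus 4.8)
The plan is to exploit the fact that the power-law kernel factorizes, $W_\alpha(\xi,\txi) = (1-\alpha)^2\,\xi^{-\alpha}\txi^{-\alpha}$, so that $A_\alpha$ is in fact a rank-one operator,
$$(A_\alpha f)(\xi) = (1-\alpha)^2\,\xi^{-\alpha}\int_0^1 \txi^{-\alpha} f(\txi)\,\txtd\txi.$$
The crucial structural observation is that the inner integral defining $A_\alpha$ is taken against the Lebesgue measure $\lambda$ (this is how the operator was fixed in \eqref{eq:powerlawgraphop}), whereas the pairing $(\cdot,\cdot)_{L^2([0,1],\tmu)}$ and the normalization are taken against the candidate measure $\tmu$. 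This mismatch is what I would leverage: against $\lambda$ the singular weight $\xi^{-\alpha}$ remains integrable (as $\alpha<1$), but it fails to be square-integrable for $\alpha\ge\tfrac12$, and no reweighting by a probability measure $\tmu$ can repair this near the origin.

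Concretely, for $\delta\in(0,1]$ I would test with the indicator $f_\delta:=\mathbbm{1}_{[0,\delta]}\in L^\infty([0,1],\tmu)$. Since $f_\delta\ge 0$ the quadratic form separates into a product of a Lebesgue integral and a $\tmu$-integral,
$$(A_\alpha f_\delta,f_\delta)_{L^2(\tmu)} = (1-\alpha)^2\Big(\int_0^\delta \txi^{-\alpha}\,\txtd\txi\Big)\Big(\int_0^\delta \xi^{-\alpha}\,\txtd\tmu(\xi)\Big),$$
while $\|f_\delta\|^2_{L^2(\tmu)}=\tmu([0,\delta])$. Evaluating the first factor as $\delta^{1-\alpha}/(1-\alpha)$ and bounding the second from below by $\delta^{-\alpha}\,\tmu([0,\delta])$ (using that $\xi^{-\alpha}\ge\delta^{-\alpha}$ on $(0,\delta]$), the Rayleigh quotient satisfies
$$\frac{(A_\alpha f_\delta,f_\delta)_{L^2(\tmu)}}{\|f_\delta\|^2_{L^2(\tmu)}} \ge (1-\alpha)\,\delta^{1-2\alpha}.$$
Here the support hypothesis $\supp\tmu=[0,1]$ enters precisely to guarantee $\tmu([0,\delta])>0$ for every $\delta>0$, so that $f_\delta\neq 0$ in $L^2(\tmu)$ and the quotient is well-defined; should the $\tmu$-integral of $\xi^{-\alpha}$ happen to diverge for some $\delta$ (e.g.\ if $\tmu$ charges a neighborhood of $0$ too heavily), the form is already $+\infty$ and we are done a fortiori.

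Finally I would let $\delta\to 0^+$. For $\alpha\in(\tfrac12,1)$ the exponent $1-2\alpha$ is negative, hence $\delta^{1-2\alpha}\to\infty$, which forces $n_{\tmu}(A_\alpha)=\infty$. Since $\tmu$ was an arbitrary probability measure with full support, this establishes the claim. I do not anticipate a serious obstacle: the argument reduces to a short computation once the rank-one factorization is noticed. The only points demanding care are the book-keeping of the two distinct measures in the quadratic form (Lebesgue inside $A_\alpha$, $\tmu$ outside) and the harmless degenerate case of a divergent $\tmu$-integral, which I would dispatch by the a-fortiori remark above.
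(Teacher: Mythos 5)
Your proposal is correct and follows essentially the same route as the paper's proof: both exploit the rank-one factorization of the power-law kernel, test the quadratic form on (normalized) indicators of $[0,\delta]$ (the paper takes $\delta=1/n$), and use the pointwise bound $\xi^{-\alpha}\geq\delta^{-\alpha}$ on $(0,\delta]$ to force the Rayleigh quotient to grow like $\delta^{1-2\alpha}\to\infty$. Your explicit handling of the degenerate case where $\int_0^\delta\xi^{-\alpha}\,\txtd\tmu=\infty$ is a small added care point, but the argument is the same.
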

\begin{proof}
For any such measure $\tmu$ and $\rho\in L^\infty([0,1])$, we  have the following identity
	\begin{equation}\label{eq:numrangeA}
		\int_{[0,1]} \rho(\xi) (A_\alpha \rho)(\xi)  \txtd\tmu(\xi) = (1-\alpha)^2 \left(\int_{[0,1]} \xi^{-\alpha} \rho(\xi) \txtd\tmu(\xi)\right) \left(\int_{[0,1]} \xi^{-\alpha} \rho(\xi) \txtd\xi \right).
	\end{equation}
Now, for $n\in\N$ define the sequence
\begin{equation*}
	\rho_n(\xi):= 
	\begin{cases}
		\tmu([0,\frac{1}{n}])^{-\frac12},& \xi \in [0,\frac{1}{n}],\\
		0,& \xi \in (\frac1n,1].
	\end{cases}
\end{equation*}
 Then $\rho_n \in L^\infty(\tmu)$ and $\|\rho_n\|_{L^2(\tmu)} = 1$ for all $n\in \N$. 
In order for $n_{\tmu}(A_\alpha)<\infty$ to hold, it is necessary that:
\begin{equation}\label{eq:rhonbound}
	\int_{[0,1]} \rho_n(\xi) (A \rho_n)(\xi)~\txtd\tmu(\xi) = \tmu([0,\frac1n])^{-1} \int_{[0,\frac1n]} \xi^{-\alpha}~\txtd\tmu(\xi) ~n^{\alpha - 1} \leq C
\end{equation}
for all $n\in \N$. But as  $\xi^{-\alpha} \geq n^{\alpha}$ for all $\xi\in[0,\frac1n]$, it follows that
\begin{equation*}
\tmu([0,\frac1n])^{-1} \int_{[0,\frac1n]} \xi^{-\alpha} ~\txtd\tmu(\xi) ~n^{\alpha - 1} \geq  n^{2\alpha -1}. 
\end{equation*}
Plugging this estimate back in \eqref{eq:rhonbound} leads to the condition $n^{2\alpha -1} \leq C$, which needs to be satisfied for all $n\in\N$. But as $2\alpha -1 >0$ by assumption, this cannot be satisfied.
\end{proof}

\begin{remark}
     We have discussed the limitations of the entropy method in \S\ref{subsec:noL2onA}. Here, we computed the limitations explicitly on the examples of power law graphops. In summary, the main culprit stems from the lack of regularization mechanisms in the equation in the network variable. Solutions evolve only according to a spatial diffusion term and a spatial drift, which do not directly improve the regularity of solutions in the network variable over time. To achieve improved regularity through the equation's dynamics, one could consider modifying the interaction term such that the two variables are coupled more directly.
\end{remark}

\subsection*{Conclusion}
In this work, we investigated graphop McKean--Vlasov equations as a mean-field formulation of a system of interacting stochastic McKean (or noisy Kuramoto-type) differential equations with diverse heterogeneous interaction patterns. The resulting solutions depend not only on space and time but also on an additional network variable $\xi$. We have shown existence of classical solutions for each fixed $\xi$ and finite $\xi$-averaged entropy. We further applied the entropy method to show global stability of the chaotic steady state, provided the graphop is bounded in $L^2$ and the dynamic's interaction strength does not exceed a graphop-dependent threshold. 
Crucially, the method achieves explicit decay rates and can deal with graphs of dense, intermediately dense and sparse structures of unbounded degree. This has been demonstrated on various prototypical examples. We have also extended the results to the closely related Sakaguchi-Kuramoto model with frequency distribution and heterogeneous interactions, highlighting the method's robustness with respect to model variations. We have discussed the limitations and demonstrated them explicitly on the examples of power law graphops. With the provided results, we hope to showcase a rather general graph limit theory of graphops is well suited to be incorporated in established PDE methods. 
\bibliography{23kw} 

\begin{thebibliography}{10}

\bibitem{BaSz20}
\'{A}gnes Backhausz and Bal\'{a}zs Szegedy.
\newblock Action convergence of operators and graphs.
\newblock {\em Canadian Journal of Mathematics}, 74(1):72--121, 2022.

\bibitem{BS01}
Itai Benjamini and Oded Schramm.
\newblock Recurrence of distributional limits of finite planar graphs.
\newblock {\em Electron. J. Probab.}, 6:1--13, 2001.

\bibitem{bohle22}
Tobias B{\"o}hle, Christian Kuehn, and Thalhammer Mechthild.
\newblock On the reliable and efficient numerical integration of the kuramoto
  model and related dynamical systems on graphs.
\newblock {\em International Journal of Computer Mathematics}, 99(1):31--57,
  2022.

\bibitem{BV05}
Fran{\c{c}}ois Bolley and C{\'e}dric Villani.
\newblock Weighted {C}sisz{\'a}r-{K}ullback-{P}insker inequalities and
  applications to transportation inequalities.
\newblock {\em Annales de la Facult{\'e} des sciences de Toulouse:
  Math{\'e}matiques}, 14(3):331--352, 2005.

\bibitem{bollobas2011}
B{\'e}la Bollob{\'a}s and Oliver Riordan.
\newblock Sparse graphs: metrics and random models.
\newblock {\em Random Structures \& Algorithms}, 39(1):1--38, 2011.

\bibitem{borgs17}
Christian Borgs and Jennifer Chayes.
\newblock Graphons: A nonparametric method to model, estimate, and design
  algorithms for massive networks.
\newblock {\em Proceedings of the 2017 ACM Conference on Economics and
  Computation}, pages 665--672, 2017.

\bibitem{borgs2019}
Christian Borgs, Jennifer Chayes, Henry Cohn, and Yufei Zhao.
\newblock An {$L^p$} theory of sparse graph convergence {I}: Limits, sparse
  random graph models, and power law distributions.
\newblock {\em Transactions of the American Mathematical Society},
  372(5):3019--3062, 2019.

\bibitem{brickman1961}
Louis Brickman.
\newblock On the field of values of a matrix.
\newblock {\em Proceedings of the American Mathematical Society}, 12(1):61--66,
  1961.

\bibitem{CaGyPaSc}
J.~A. Carrillo, R.~S. Gvalani, G.~A. Pavliotis, and A.~Schlichting.
\newblock Long-time behaviour and phase transitions for the {M}c{K}ean-{V}lasov
  equation on the torus.
\newblock {\em Arch. Ration. Mech. Anal.}, 235(1):635--690, 2020.

\bibitem{BQQLC}
Bernard Chazelle, Quansen Jiu, Qianxiao Li, and Chu Wang.
\newblock Well-posedness of the limiting equation of a noisy consensus model in
  opinion dynamics.
\newblock {\em J. Differential Equations}, 263(1):365--397, 2017.

\bibitem{chiba2015}
Havato Chiba.
\newblock A proof of the {Kuramoto} conjecture for a bifurcation structure of
  the infinite-dimensional {Kuramoto} model.
\newblock {\em Ergodic Theory and Dynamical Systems}, 35(3):762–834, 2015.

\bibitem{ChMe1}
Hayato Chiba and Georgi~S. Medvedev.
\newblock The mean field analysis of the {K}uramoto model on graphs {I}. {T}he
  mean field equation and transition point formulas.
\newblock {\em Discrete Contin. Dyn. Syst.}, 39(1):131--155, 2019.

\bibitem{ChMe2}
Hayato Chiba and Georgi~S. Medvedev.
\newblock The mean field analysis of the {K}uramoto model on graphs {II}.
  {A}symptotic stability of the incoherent state, center manifold reduction,
  and bifurcations.
\newblock {\em Discrete Contin. Dyn. Syst.}, 39(7):3897--3921, 2019.

\bibitem{crawford1994}
John~David Crawford.
\newblock Amplitude expansions for instabilities in populations of
  globally-coupled oscillators.
\newblock {\em Journal of Statistical Physics}, 74:1047--1084, 1994.

\bibitem{dietert2018}
Helge Dietert, Bastien Fernandez, and David Gérard-Varet.
\newblock Landau damping to partially locked states in the kuramoto model.
\newblock {\em Communications on Pure and Applied Mathematics}, 71(5):953--993,
  2018.

\bibitem{EY87}
Michel {\'E}mery and Joseph~E Yukich.
\newblock A simple proof of the logarithmic {Sobolev} inequality on the circle.
\newblock {\em S{\'e}minaire de probabilit{\'e}s de Strasbourg}, 21:173--175,
  1987.

\bibitem{frank2005}
Till~Daniel Frank.
\newblock {\em Nonlinear Fokker-Planck equations: fundamentals and
  applications}.
\newblock Springer Science \& Business Media, 2005.

\bibitem{GJKM}
Marios~Antonios Gkogkas, Benjamin J{\"u}ttner, Christian Kuehn, and
  Erik~Andreas Martens.
\newblock Graphop mean-field limits and synchronization for the stochastic
  {Kuramoto} model.
\newblock {\em Chaos: An Interdisciplinary Journal of Nonlinear Science},
  32(11), 2022.

\bibitem{GkKu}
Marios~Antonios Gkogkas and Christian Kuehn.
\newblock Graphop mean-field limits for kuramoto-type models.
\newblock {\em SIAM Journal on Applied Dynamical Systems}, 21(1):248--283,
  2022.

\bibitem{HLS2014}
Hamed Hatami, L{\'a}szl{\'o} Lov{\'a}sz, and Bal{\'a}zs Szegedy.
\newblock Limits of locally--globally convergent graph sequences.
\newblock {\em Geometric and Functional Analysis}, 24(1):269--296, 2014.

\bibitem{hegselmann02}
Rainer Hegselmann and Ulrich Krause.
\newblock Opinion dynamics and bounded confidence models, analysis, and
  simulation.
\newblock {\em Journal of Artifical Societies and Social Simulation}, 5(3),
  2002.

\bibitem{med2017}
Dmitry Kaliuzhnyi-Verbovetskyi and Georgi~S Medvedev.
\newblock The semilinear heat equation on sparse random graphs.
\newblock {\em SIAM Journal on Mathematical Analysis}, 49(2):1333--1355, 2017.

\bibitem{keller1970}
Evelyn~F Keller and Lee~A Segel.
\newblock Initiation of slime mold aggregation viewed as an instability.
\newblock {\em Journal of theoretical biology}, 26(3):399--415, 1970.

\bibitem{Ku2020}
Christian Kuehn.
\newblock Network dynamics on graphops.
\newblock {\em New Journal of Physics}, 22(053030), 2020.

\bibitem{KX2021}
Christian Kuehn and Chuang Xu.
\newblock Vlasov equations on digraph measures.
\newblock {\em Journal of Differential Equations}, 339:261--349, 2022.

\bibitem{kuramoto75}
Yoshiki Kuramoto.
\newblock Self-entrainment of a population of coupled non-linear oscillators.
\newblock In {\em Int.\ Symposium on Mathematical Problems in Theoretical
  Physics}, pages 420--422. Springer, 1975.

\bibitem{kuramoto84}
Yoshiki Kuramoto.
\newblock {\em Chemical Turbulence}.
\newblock Springer Berlin Heidelberg, Berlin, Heidelberg, 1984.

\bibitem{lacker2019}
Daniel Lacker, Kavita Ramanan, and Ruoyu Wu.
\newblock Large sparse networks of interacting diffusions.
\newblock {\em arXiv preprint arXiv:1904.02585}, 2019.

\bibitem{lacker2023}
Daniel Lacker, Kavita Ramanan, and Ruoyu Wu.
\newblock Local weak convergence for sparse networks of interacting processes.
\newblock {\em The Annals of Applied Probability}, 33(2):843--888, 2023.

\bibitem{LS06}
L\'{a}szl\'{o} Lov\'{a}sz and Bal\'{a}zs Szegedy.
\newblock Limits of dense graph sequences.
\newblock {\em J. Combin. Theory Ser. B}, 96(6):933--957, 2006.

\bibitem{McKean1}
H.P. McKean.
\newblock A class of {Markov} processes associated with nonlinear parabolic
  equations.
\newblock {\em Proc. Natl. Acad. Sci. USA}, 56(6):1907--1911, 1966.

\bibitem{McKean2}
H.P. McKean.
\newblock Propagation of chaos for a class of non-linear parabolic equations.
\newblock In {\em Stochastic {D}ifferential {E}quations ({L}ecture {S}eries in
  {D}ifferential {E}quations, {S}ession 7, {C}atholic {U}niv., 1967)}, pages
  41--57. Air Force Office Sci. Res., Arlington, Va., 1967.

\bibitem{Med2014}
Georgi~S. Medvedev.
\newblock The nonlinear heat equation on dense graphs and graph limits.
\newblock {\em SIAM Journal on Mathematical Analysis}, 46(4):2743--2766, 2014.

\bibitem{med2019}
Georgi~S. Medvedev.
\newblock The continuum limit of the {Kuramoto} model on sparse random graphs.
\newblock {\em Communications in Mathematical Sciences}, 17(4), 2019.

\bibitem{PM22}
Javier Morales and David Poyato.
\newblock On the trend to global equilibrium for {Kuramoto} oscillators.
\newblock {\em Annales de l'Institut Henri Poincaré C, Analyse non linéaire},
  40(3):631--716, August 2022.

\bibitem{oelschlager}
Karl Oelschlager.
\newblock A martingale approach to the law of large numbers for weakly
  interacting stochastic processes.
\newblock {\em The Annals of Probability}, 12(2):458--479, 1984.

\bibitem{Saka}
Hidetsugu Sakaguchi.
\newblock {Cooperative Phenomena in Coupled Oscillator Systems under External
  Fields}.
\newblock {\em Progress of Theoretical Physics}, 79(1):39--46, 01 1988.

\bibitem{SSY88}
Hidetsugu Sakaguchi, Shigeru Shinomoto, and Yoshiki Kuramoto.
\newblock Phase transitions and their bifurcation analysis in a large
  population of active rotators with mean-field coupling.
\newblock {\em Progress of Theoretical Physics}, 79(3):600--607, 1988.

\bibitem{strogatz2000}
Steven~H. Strogatz.
\newblock From {Kuramoto} to {Crawford}: exploring the onset of synchronization
  in populations of coupled oscillators.
\newblock {\em Physica D: Nonlinear Phenomena}, 143(1-4):1--20, 2000.

\bibitem{strogatz91}
Steven~H. Strogatz and Renato~E. Mirollo.
\newblock Stability of incoherence in a population of coupled oscillators.
\newblock {\em Journal of Statistical Physics}, 63:613--635, 1991.

\bibitem{villani09}
C{\'e}dric Villani.
\newblock Hypocoercivity.
\newblock {\em Mem. Amer. Math. Soc.}, 202(950), 2009.

\end{thebibliography}
\bibliographystyle{plain} 

\end{document}